\numberwithin{equation}{section}
\newtheorem{theorem}{Theorem}[section]
\newtheorem{lemma}[theorem]{Lemma}
\newtheorem{definition}[theorem]{Definition}
\newtheorem{proposition}[theorem]{Proposition}
\newtheorem{corollary}[theorem]{Corollary}
\newcommand{\R}{\mathbb{R}}
\newcommand{\N}{\mathbb{N}}
\renewcommand{\epsilon}{\varepsilon}
\newcommand{\eps}{\varepsilon}
\newcommand{\e}{\varepsilon}
\renewcommand{\le}{\leqslant}
\renewcommand{\ge}{\geqslant}
\title[A three-dimensional symmetry result]{A three-dimensional symmetry result\\
for a phase transition equation\\ in the genuinely
nonlocal regime}
\author{Serena Dipierro}
\address[Serena Dipierro]{Dipartimento di Matematica,
Universit\`a degli studi di Milano,
Via Saldini 50, 20133 Milan, Italy}
\email{serena.dipierro@unimi.it}
\author{Alberto Farina}
\address[Alberto Farina]{LAMFA -- CNRS UMR 6140 and
Facult\'e des Sciences, Universit\'e de Picardie Jules Verne,
33 rue Saint-Leu,
80039 Amiens CEDEX 1, France}
\email{alberto.farina@u-picardie.fr}
\author{Enrico Valdinoci}
\address[Enrico Valdinoci]{School of Mathematics and Statistics,
University of Melbourne,
813 Swanston Street, Parkville VIC 3010, Australia, and
Istituto di Matematica Applicata e Tecnologie 
Informatiche, Consiglio Nazionale delle Ricerche, Via Ferrata 1, 27100
Pavia, Italy,
and Dipartimento di Matematica, Universit\`a degli studi di Milano,
Via Saldini 50, 20133 Milan, Italy}
\email{enrico@mat.uniroma3.it}
\begin{document}

\begin{abstract} We consider bounded solutions of the nonlocal Allen-Cahn equation
$$ (-\Delta)^s u=u-u^3\qquad{\mbox{ in }}\R^3,$$
under the monotonicity condition~$\partial_{x_3}u>0$
and in the 
genuinely
nonlocal regime in which~$s\in\left(0,\frac12\right)$.

Under the limit assumptions
$$ \lim_{x_n\to-\infty} u(x',x_n)=-1\quad{\mbox{ and }}\quad
\lim_{x_n\to+\infty} u(x',x_n)=1,$$
it has been recently shown in~\cite{DSVarxiv} that~$u$ is necessarily $1$D,
i.e. it depends only on one Euclidean variable.

The goal of this paper is to obtain a similar result without
assuming such limit conditions.

This type of results can be seen as nonlocal counterparts of the celebrated
conjecture formulated by Ennio De Giorgi in~\cite{DG}.
\end{abstract}

\maketitle

\section{Introduction}

Goal of this paper is to provide a
one-dimensional symmetry result for a phase transition
equation in a genuinely nonlocal regime in three spatial dimensions.
That is, we consider a fractional Allen-Cahn equation of the type
\begin{equation}\label{ALLEN}
(-\Delta)^s u=u-u^3\end{equation} in~$\R^3$, with
$s\in\left(0,\frac12\right)$, and, under boundedness and monotonicity
assumptions, we prove that~$u$ depends only on one variable, up to a rotation.

In this setting,
as customary, for~$s\in(0,1)$, we consider the fractional Laplace operator
defined by
$$ (-\Delta)^s u(x):=c_{n,s}\int_{\R^n}\frac{2u(x)-u(x+y)-u(x-y)}{|y|^{n+2s}}\,dy,$$
with
\begin{equation}\label{cns} c_{n,s}:=\frac{2^{2s-1}\,s\,\Gamma\left( \frac{n}2+s\right)}{
\pi^{\frac{n}2}\,\Gamma(1-s)},\end{equation}
being~$\Gamma$ the Euler's Gamma Function.

Moreover, we say that~$u$ is $1$D if there exist~$u_o:\R\to\R$ and~$\omega_o\in
S^{n-1}$ such that~$u(x)=u_o(\omega_o\cdot x)$ for any~$x\in\R^n$.
Then, our main result in this paper is the following:

\begin{theorem}\label{MAIN}
Let~$n\le3$, $s\in\left(0,\frac12\right)$ and~$u\in C^2(\R^n,[-1,1])$
be a solution of~$(-\Delta)^s u=u-u^3$ in~$\R^n$, 
with~$\partial_{x_n}u>0$ in~$\R^n$.
Then, $u$ is $1$D.
\end{theorem}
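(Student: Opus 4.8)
The plan is to treat the case $n=3$ — the only genuinely new one — by analysing the profiles of $u$ at $x_3=\pm\infty$ and reducing matters to the result of~\cite{DSVarxiv}. The cases $n\le 2$ are standard in this range: for $n=1$ there is nothing to prove, while for $n=2$ one differentiates the equation to obtain $(-\Delta)^s(\partial_{x_2}u)=(1-3u^2)\,\partial_{x_2}u$, so the positive function $\partial_{x_2}u$ is a solution of the linearized equation; this makes $u$ a stable solution, and bounded stable solutions of $(-\Delta)^sv=v-v^3$ in the plane are known to be $1$D for every $s\in(0,1)$.

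So let $n=3$ and write $x=(x',x_3)$ with $x'\in\R^2$. By monotonicity the pointwise limits $\bar u(x'):=\lim_{t\to+\infty}u(x',t)$ and $\underline u(x'):=\lim_{t\to-\infty}u(x',t)$ exist, and by the uniform interior regularity estimates for $(-\Delta)^su=u-u^3$ the vertical translates of $u$ converge to $\bar u$ and $\underline u$ in $C^2_{\mathrm{loc}}(\R^3)$. Since the fractional Laplacian of a function independent of $x_3$ coincides with the two-dimensional fractional Laplacian of its trace, $\bar u$ and $\underline u$ are bounded solutions of $(-\Delta)^sv=v-v^3$ in $\R^2$; moreover $\underline u\le u\le\bar u$, and $\bar u\equiv\underline u$ is impossible since it would make $u$ constant in $x_3$. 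Each vertical translate of $u$ is stable (stability being translation invariant), and stability passes to $C^2_{\mathrm{loc}}$-limits of solutions, so $\bar u$ and $\underline u$ are stable solutions in $\R^3$ that do not depend on $x_3$; a dimension-reduction argument — testing the three-dimensional quadratic form against a planar competitor times a slowly varying cutoff in $x_3$ — then yields stability in $\R^2$, hence, by the planar case, that $\bar u$ and $\underline u$ are $1$D. Finally, a bounded stable $1$D solution of $(-\Delta)^sv=v-v^3$ is one of the constants $0,\pm1$ or, up to translation and reflection, the monotone layer $u_0$; the unstable value $0$ is discarded, $\bar u\equiv-1$ or $\underline u\equiv1$ would force $u\equiv\mp1$, so, writing $\bar u(x')=g(\omega\cdot x')$ and $\underline u(x')=h(\eta\cdot x')$ with $\omega,\eta\in S^1$, each of $g$ and $h$ is the constant $1$, the constant $-1$, or the layer.

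It remains to upgrade this to a statement about $u$. If $g\equiv1$ and $h\equiv-1$, then $u$ satisfies exactly the limit assumptions of~\cite{DSVarxiv}, and we conclude that $u$ is $1$D. Otherwise at least one profile is a layer, and then from $h(\eta\cdot x')=\underline u(x')\le\bar u(x')=g(\omega\cdot x')$ one sees that $\bar u$ and $\underline u$ depend on a single common direction $\omega$: if both are layers and $\omega$ and $\eta$ were independent, letting $\omega\cdot x'\to-\infty$ along a direction transverse to $\omega$ would force $h\equiv-1$, a contradiction, while a constant profile depends on every direction. Choosing the unit vector $e\perp\omega$ in $\R^2$, both $\bar u$ and $\underline u$ are independent of $e$, and the key lemma is that a bounded solution, monotone in $x_3$, whose two profiles at $x_3=\pm\infty$ are independent of a horizontal direction $e$ must itself be independent of $e$. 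This is a sliding argument of Berestycki--Caffarelli--Nirenberg type in its nonlocal form: one compares $u(\cdot+\tau e)$ with $u$, both trapped between the $e$-independent barriers $\underline u$ and $\bar u$, uses that the difference tends to $0$ locally uniformly as $x_3\to\pm\infty$ together with the strong comparison principle available in the genuinely nonlocal regime to push the inequality $u(\cdot+\tau e)\ge u$ down from $\tau=+\infty$ to $\tau=0$, and deduces equality for all $\tau$. Once $u$ is independent of $e$, after a rotation it is a bounded solution in $\R^2$, still monotone in $x_3$, hence $1$D by the planar case.

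I expect the sliding/maximum-principle lemma of the last step to be the main obstacle: it has to be run on an unbounded slab with no compactness, and controlling the behaviour as $x_3\to\pm\infty$ while exploiting the favorable sign of $1-3u^2$ where $u$ is close to $\pm1$ is precisely where the genuinely nonlocal range $s\in(0,\tfrac12)$ enters — directly, and also through the applicability of~\cite{DSVarxiv}. A secondary technical point is the dimension reduction for stability, where the non-product structure of the fractional Gagliardo form must be handled carefully once the $x_3$-cutoff is inserted.
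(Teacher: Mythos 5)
Your reduction to the profiles, their classification (stability passes to the limits, stable planar solutions are $1$D and monotone, the limits lie in $\{-1,0,1\}$, the value $0$ is excluded by instability, intermediate layers are excluded), and the treatment of the case $\underline u\equiv-1$, $\overline u\equiv1$ via Theorem~1.4 of~\cite{DSVarxiv} all agree with the paper. The gap is in the remaining case, where at least one profile is a genuine layer. Your ``key lemma'' --- that a solution monotone in $x_3$ whose two profiles are independent of a horizontal direction $e$ must itself be independent of $e$ --- is exactly the hard point, and the sliding argument you sketch cannot be started: to run it you need the initial comparison $u(\cdot+\tau e)\ge u$ for all large $\tau$, and translating in the direction $e$ does not move you towards either barrier. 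Both $u(\cdot+\tau e)$ and $u$ lie in the same corridor $\big[\,\underline u,\overline u\,\big]$ for \emph{every} $\tau$, the transition slab $\{|\omega\cdot x'|\le M\}$ is unbounded in the $e$ direction, and no ordering between $u$ and its $e$-translates is available at $\tau=+\infty$. This is precisely the obstruction that makes De Giorgi--type problems nontrivial in dimension $\ge3$: the maximum principle and sliding only control the monotone direction, and the transverse rigidity must come from elsewhere.

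The paper supplies that missing input variationally rather than by comparison. It first shows (Lemma~\ref{CL:MO}) that the monotone $1$D profiles are local minimizers; then, via the renormalized extension energy of Section~\ref{SEC:1}, the range-constraint Lemma~\ref{INTRAP} and the cut-and-paste argument of Lemma~\ref{0tyg27427412}, it deduces that $u$ itself is a local minimizer. Minimality permits a blow-down: $u(x/\eps)$ converges to $\chi_E-\chi_{E^c}$ with $E$ a nonlocal minimal set; the blow-down of the non-constant profile produces a halfspace contained in (or containing) $E$, whence $E$ is a halfspace, and the improvement-of-flatness Theorem~1.2 of~\cite{DSVarxiv} then gives that $u$ is $1$D. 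To salvage your outline you would have to either prove your sliding lemma --- which does not appear to be accessible by maximum-principle methods alone --- or replace that step by this minimality and blow-down argument.
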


Recently, a result similar to that in Theorem~\ref{MAIN} has been established
in Theorem~1.4 of~\cite{DSVarxiv}, under the additional assumption that
\begin{equation} \label{LIMIT}
\lim_{x_n\to-\infty} u(x',x_n)=-1\quad{\mbox{ and }}\quad
\lim_{x_n\to+\infty} u(x',x_n)=1.\end{equation}
Therefore, Theorem~\ref{MAIN} here
is the extension of Theorem~1.4 of~\cite{DSVarxiv}
in which it is not necessary to assume the limit condition~\eqref{LIMIT}.\medskip

We recall that equation~\eqref{ALLEN}
represents a phase transition subject to long-range interactions, see e.g.
Chapter~5 in~\cite{bucur} for a detailed description of the model.
In particular, the states~$u=-1$ and~$u=1$ would correspond to the
``pure phases'' and equation~\eqref{ALLEN} models
the coexistence between intermediate phases and studies
the separation between them.  

At a large scale, the separation between phases
is governed by the minimization of a limit interface, which can be either
of local or nonlocal type,
in dependence of the fractional parameter~$s\in(0,1)$,
with a precise bifurcation occurring at
the threshold~$s=\frac12$. 
More precisely, as proved in~\cite{MR2948285, MR3133422},
if~$u$ is a local energy minimizer for equation~\eqref{ALLEN}
and~$u_\eps(x):=u(x/\eps)$, as~$\eps\searrow0$
we have that~$u_\eps$ approaches a ``pure phase''
step function with values in~$\{-1,1\}$. That is, we can write, up to subsequences,
$$ \lim_{\eps\searrow0} u_\eps = \chi_E-\chi_{\R^n\setminus E},$$
and the set~$E$ possesses a minimal interface criterion, depending on~$s$.
More precisely, in the ``weakly nonlocal regime'' in which~$s\in\left[\frac12,1\right)$,
the set~$E$ turns out to be a local minimizer for the classical
perimeter functional: in this sense, on a large scale, the
weakly nonlocal regime is indistinguishable with respect to the classical case
and, in spite of the fractional nature of equation~\eqref{ALLEN},
its limit interface behaves in a local fashion
when~$s\in\left[\frac12,1\right)$.

Conversely, in the ``genuinely nonlocal regime''
in which~$s\in\left( 0,\frac12\right)$,
the set~$E$ turns out to be a local minimizer for the nonlocal perimeter functional
which was introduced in~\cite{MR2675483}. That is, the interface
of long-range phase transitions when~$s\in\left(0, \frac12\right)$
preserves its nonlocal features at any arbitrarily large scale,
and, as a matter of fact, the scaling properties of the associated
energy functional preserve this nonlocal character as well.
Needless to say, the persistence of the nonlocal
properties at any scale and the somehow unpleasant scaling of the associated
energies provide a number of difficulties in the analysis
of long-range phase coexistence models.\medskip

In particular, symmetry properties
of the solutions of equation~\eqref{ALLEN}
have been intensively studied, also in view of a celebrated
conjecture by E. De Giorgi in the classical case, see~\cite{DG}.
This classical conjecture asks whether or not bounded and monotone
solutions of phase transitions equations are necessarily $1$D.
In the fractional framework, a positive answer
to this problem was known in dimension~$2$
(see~\cite{MR2177165} for the case~$s=\frac12$
and~\cite{SV09, cabre-TAMS, MR3035063} for the full range~$s\in(0,1)$).
Also, in dimension~$3$, a positive answer was known
only in the weakly nonlocal regimes~$s=\frac12$
and~$s\in\left(\frac12,1\right)$, see~\cite{MR2644786,MR3148114}.
See also~\cite{2016arXiv161009295S} for a very recent contribution
about symmetry results for equation~\eqref{ALLEN}
in the weakly nonlocal regime --
as a matter of fact, the lack of ``good energy estimates''
prevented the extension of the techniques of these articles
to the strongly nonlocal regime~$s\in\left(0,\frac12\right)$.
In this sense, our Theorem~\ref{MAIN} aims at overcoming
these difficulties, by relying on the very recent paper~\cite{DSVarxiv},
which has now taken into account the weakly nonlocal regime
for equation~\eqref{ALLEN}.\medskip

After this work was completed we have 
also received a preliminary version of the article \cite{cabre}, 
in which symmetry results for fractional Allen-Cahn equations 
will be obtained also in the setting of stable solutions.
\medskip

The rest of the paper is organized as follows.
In Section~\ref{SEC:1}, we recall the notion of local minimizers
and we introduce an equivalent minimization problem in an extended space:
this part is rather technical, but absolutely non-standard,
since the lack of decay of our solution and the strongly nonlocal condition~$s\in\left(0,\frac12\right)$
make the energy diverge, hence the standard extension methods
are not available in our case and we will need to introduce a suitable energy
renormalization procedure.

In Section~\ref{SEC:2}, we relate stable and minimal solutions in the one-dimensional
case, by relying also on some layer solution theory of~\cite{cabre-sire-AIHP}.

In Section~\ref{CLASS:A} we consider the profiles of the solution
at infinity and we establish their minimality and symmetry properties.

In Section~\ref{CLASS:B}, we discuss the minimization properties
under perturbation which do not overcome the limit profiles,
and in Section~\ref{CLASS:C} we recover the minimality of a solution from that
of its limit profiles. The proof of Theorem~\ref{MAIN}.
is contained in Section~\ref{CLASS:D}.

For completeness, in Section~\ref{UL810} we also provide a variant of Theorem~\ref{MAIN}
that gives minimality and symmetry results under the assumption that the limit profiles
are two-dimensional.
\medskip

It is worth to point out that the setting in Sections~\ref{SEC:1}--\ref{CLASS:C}
is very general and it applies to all the fractional powers~$s\in(0,1)$, hence
it can be seen as a useful tool to deal with a class of problems also in extended spaces,
so to recover minimal properties of the solution from some knowledge of the limit profiles.

\section{Local minimizers in $\R^n$ and extended local minimizers in~$\R^{n+1}_+$}\label{SEC:1}

Equation~\eqref{ALLEN} lies in the class of semilinear fractional equations of the type
\begin{equation}\label{ALLEN-GEN} (-\Delta)^s u=f(u).\end{equation}
{F}rom now on,
we will denote by~$f$ a bistable nonlinearity, namely,
we assume that~$f(-1)=f(1)=0$, and there exist~$\kappa > 0$
and~$c_\kappa > 0$ such that~$ f'(t)<-c_\kappa$ for
any~$t\in[-1,-1+\kappa]\cup[1-\kappa,1]$. We also assume that
\begin{equation} \label{BISTABLE}
\int_0^1 f(\sigma)\,d\sigma>0>\int_{-1}^0 f(\sigma)\,d\sigma
.\end{equation}
To ensure that the solution is sufficiently regular
in our computations, we assume that
\begin{equation}\label{QUE}
f\in C^{1,\alpha}_{\rm loc}(\R)\end{equation}
with~$\alpha\in(0,1)$
and~$\alpha>1-2s$.
We remark that, in view
of~\eqref{QUE} here and Lemma~4.4 in~\cite{cabre-sire-AIHP}, bounded
solutions of~\eqref{ALLEN-GEN} are automatically in~$C^2(\R^n)$,
with bounded second derivatives.

The prototype for such bistable nonlinearity is, of course, the case in which~$f(t)=t-t^3$.
Also,
to describe the energy framework of nonlocal phase transitions, given~$s\in(0,1)$, $v:\R^n\to\R$
and~$\omega\subset\R^n$, we consider the functional
$$ {\mathcal{F}}_\omega(v):=
\frac{c_{n,s}}{2}\iint_{Q_\omega}\frac{|v(x)-v(y)|^2}{|x-y|^{n+2s}}\,dx\,dy+\int_\omega
F(v(x))\,dx,$$
where
$$ Q_\omega:= (\omega\times\omega)\cup
(\omega\times\omega^c)\cup(\omega^c\times\omega)$$
and
$$ F(t):= -\int_{-1}^t f(\tau)\,d\tau.$$

\begin{definition}\label{D:L} We say that~$u$ is a local minimizer
if, for any~$R>0$ and any~$\varphi\in C^\infty_0(B_R)$, it holds that~$
{\mathcal{F}}_{B_R}(u)\le{\mathcal{F}}_{B_R}(u+\varphi)$.
\end{definition}

Now we describe an extended problem
and relate its local minimization to the one in Definition~\ref{D:L}
(see~\cite{caffasil}).
For this, we set~$a:=1-2s\in(-1,1)$ and~$\R^{n+1}_+:=\R^n\times(0,+\infty)$.
Then, given any~$V:\R^{n+1}_+\to\R$ and~$\Omega\subset\R^{n+1}$, we define
$$ {\mathcal{E}}_\Omega(V):=
\frac{\tilde c_{n,s}}{2}\int_{\Omega^+}z^a |\nabla V(x,z)|^2\,dx\,dz+\int_{\Omega_0}
F(V(x,0))\,dx,$$
where~$\Omega^+:=\Omega\cap\R^{n+1}_+$ and~$\Omega_0:=\Omega\cap\{z=0\}$.
Here, we used the notation~$
(x,z)\in\R^n\times(0,+\infty)$ to denote the variables of~$\R^{n+1}_+$ and~$\tilde c_{n,s}>0$
is a normalization constant.
Given~$R>0$, we also denote
\begin{eqnarray*}&&{\mathcal{B}}_R:= B_R\times(-R,R)=\big\{(x,z)\in
\R^{n}\times\R {\mbox{ s.t. }} x\in B_R {\mbox{ and }} |z|<R\big\}\\{\mbox{and }}&&
{\mathcal{B}}_R^+:= {\mathcal{B}}_R\cap\{z>0\}=
B_R\times(0,R)=\big\{(x,z)\in
\R^{n}\times\R {\mbox{ s.t. }} x\in B_R {\mbox{ and }} z\in(0,R)\big\}.\end{eqnarray*}
In this setting, we have the following notation:
\begin{definition}\label{D:L:2} We say that~$U$ is an extended local minimizer
if, for any~$R>0$ and any~$\Phi\in C^\infty_0({\mathcal{B}}_R)$, it holds that~$
{\mathcal{E}}_{{\mathcal{B}}_R}(U)\le{\mathcal{E}}_{{\mathcal{B}}_R}(U+\Phi)$.
\end{definition}

The reader can compare Definitions~\ref{D:L} and~\ref{D:L:2}. Also, given~$v\in L^\infty(\R^n)$,
we consider the $a$-harmonic extension of~$v$ to~$\R^{n+1}_+$ as the function~$E_v:
\R^{n+1}_+\to\R$
obtained by convolution with the Poisson kernel of order~$s$. More explicitly, we set
\begin{equation}\label{0djhicseodfeeyyeyeye} P(x,z):= \bar c_{n,s} \frac{z^{2s}}{(|x|^2+z^2)^{\frac{n+2s}2}}.\end{equation}
In this framework, $\bar c_{n,s}$ is a positive normalization constant such that
$$ \int_{\R^n} P(x,z)\,dx=1,$$
see e.g.~\cite{MR3461641}. Then we set
$$ E_v(x,z):=\int_{\R^n} P(x-y,z)\,v(y)\,dy=
\int_{\R^n} P(y,z)\,v(x-y)\,dy.$$
We remark that when~$v\in C^\infty_0(\R^n)$, the function~$E_v$ can also be obtained
by
minimization
of the associated Dirichlet energy, namely
\begin{equation} \label{DIRmi}
\inf_{{V\in C^\infty_0(\R^{n+1})}\atop{V(x,0)=v(x)}}
\int_{\R^{n+1}_+}z^a |\nabla V(x,z)|^2\,dx\,dz
=
\int_{\R^{n+1}_+}z^a |\nabla E_v(x,z)|^2\,dx\,dz,\end{equation}
see Lemma 4.3.3 in~\cite{bucur}. Nevertheless,
we want to consider here the more general framework in which~$v$
is bounded, but not necessarily decaying at infinity, and this will produce a number
of difficulties, also due to the lack of ``good'' functional settings.

We also remark that the setting in~\eqref{DIRmi} and the normalization
constant~$\tilde c_{n,s}$
are compatible with the choice of the constant in~\eqref{cns},
since, for any $v\in C^\infty_0(\R^n)$,
\begin{equation}\label{COMP:AK}
\frac{c_{n,s}}{2}\iint_{\R^{2n}}\frac{|v(x)-v(y)|^2}{|x-y|^{n+2s}}\,dx\,dy=
\frac{\tilde c_{n,s}}{2}\int_{\R^{n+1}_+}z^a |\nabla E_v(x,z)|^2\,dx\,dz,
\end{equation}
see e.g. formula~(4.3.15) in~\cite{bucur}.

Since these normalization constants will not play any role in the following computations,
with a slight abuse of notation, for the sake of simplicity,
we just omit them in the sequel.

In our setting, for functions~$v$ with no decay at infinity,
formula~\eqref{COMP:AK} does not make sense, since both the terms could diverge.
Nevertheless, we will be able to overcome this difficulty by an energy
renormalization procedure, based on the formal substraction of the infinite energy.
The rigorous details of this procedure are discussed in the
following\footnote{We observe that Proposition~\ref{0OAPQO182:P}
here is also related to the extension method in Lemma~7.2
in~\cite{MR2675483}, where suitable trace and extended energies
are compared in the unit ball:
in a sense, since
Proposition~\ref{0OAPQO182:P} here
compares energies defined in the whole of the space, it can be viewed
as a ``global'', or ``renormalized'', version of
Lemma~7.2
in~\cite{MR2675483}. }
result:

\begin{proposition}\label{0OAPQO182:P}
Let~$n\ge2$ and~$s\in(0,1)$.
For any~$v\in W^{2,\infty}(\R^n)$
and any~$\varphi\in C^\infty_0(\R^n)$, it
holds that
\begin{eqnarray*} +\infty&>& 
\iint_{\R^{2n}}
\frac{|(v+\varphi)(x)-(v+\varphi)(y)|^2-
|v(x)-v(y)|^2
}{|x-y|^{n+2s}}\,dx\,dy\\&=&
\lim_{R\to+\infty}
\int_{ {\mathcal{B}}_R^+ }
z^a\big(|\nabla E_{v+\varphi}(x,z)|^2
-|\nabla E_v(x,z)|^2\big)\,dx\,dz\\&
=& \lim_{R\to+\infty} \inf_{{\Phi\in
C^\infty_0({\mathcal{B}}_R)}\atop{\Phi(x,0)=\varphi(x)}}
\int_{ {\mathcal{B}}_R^+ }
z^a\big(|\nabla (E_{v}+\Phi)(x,z)|^2
-|\nabla E_v(x,z)|^2\big)\,dx\,dz
.\end{eqnarray*}
\end{proposition}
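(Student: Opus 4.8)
The plan is to establish the three equalities (and the finiteness on the left) separately, moving from the trace side to the extension side.

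\textit{Step 1: Finiteness of the Gagliardo difference.} Expanding the square, the integrand equals
$\big(2v(x)-2v(y)+\varphi(x)-\varphi(y)\big)\big(\varphi(x)-\varphi(y)\big)\,|x-y|^{-n-2s}$.
Fix $R$ with $\operatorname{supp}\varphi\subset B_R$. On the region where both $x,y\in B_{2R}$ one uses $v\in W^{2,\infty}$, hence $v$ and $\varphi$ are Lipschitz, so the integrand is $O(|x-y|^{2-n-2s})$ near the diagonal and the integral converges since $2-2s>0$. On the region where $x\in B_R$ and $|y|$ is large (the only other region contributing, since $\varphi(x)-\varphi(y)=0$ when both are outside $B_R$), one has $\varphi(y)=0$, so the integrand is $\big(2(v(x)-v(y))+\varphi(x)\big)\varphi(x)|x-y|^{-n-2s}$; here $v$ is bounded and $|\varphi(x)|\le\|\varphi\|_\infty$, so the integrand is $O(|y|^{-n-2s})$, which is integrable at infinity. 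Summing the two contributions gives the bound $+\infty>(\,\cdot\,)$.

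\textit{Step 2: First equality, trace energy equals extended energy.} For the localized difference we want to pass from the Gagliardo form to the $z$-weighted Dirichlet form of the $a$-harmonic extension. The difficulty is that neither $E_v$ nor $E_{v+\varphi}$ has finite Dirichlet energy, so \eqref{COMP:AK} cannot be applied termwise. I would first note $E_{v+\varphi}=E_v+E_\varphi$ by linearity, and $E_\varphi$ is the extension of a $C^\infty_0$ function, so it has finite energy and \eqref{COMP:AK} applies to $\varphi$ alone. Then expand
$|\nabla E_{v+\varphi}|^2-|\nabla E_v|^2=2\nabla E_v\cdot\nabla E_\varphi+|\nabla E_\varphi|^2$,
and correspondingly
$|(v+\varphi)(x)-(v+\varphi)(y)|^2-|v(x)-v(y)|^2=2\big(v(x)-v(y)\big)\big(\varphi(x)-\varphi(y)\big)+|\varphi(x)-\varphi(y)|^2$.
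By \eqref{COMP:AK} applied to $\varphi$, the pure-$\varphi$ terms already match (in the full-space integral, hence also as $R\to\infty$ of the truncated integrals). So it remains to show
$$\lim_{R\to\infty}\int_{\mathcal B_R^+}z^a\,\nabla E_v\cdot\nabla E_\varphi\,dx\,dz=\iint_{\R^{2n}}\frac{(v(x)-v(y))(\varphi(x)-\varphi(y))}{|x-y|^{n+2s}}\,dx\,dy.$$
This is a polarized version of the compatibility identity. To make sense of it, I would integrate by parts in $\mathcal B_R^+$: since $E_\varphi$ is compactly supported in the $x$-directions (decaying like $|x|^{-n-2s}$) and $E_v$ is $a$-harmonic, $\operatorname{div}(z^a\nabla E_v)=0$ in the interior, so
$\int_{\mathcal B_R^+}z^a\nabla E_v\cdot\nabla E_\varphi=\int_{\partial\mathcal B_R^+}z^a\,E_\varphi\,\partial_\nu E_v$.
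The lateral part of this boundary integral vanishes as $R\to\infty$ because $E_\varphi$ decays polynomially while $z^a|\nabla E_v|$ is bounded (here $v\in W^{2,\infty}$ is used: the Poisson extension of a $W^{2,\infty}$ function has bounded gradient with the correct $z^a$-weighted normal derivative behaviour). The top piece $\{z=R\}$ likewise vanishes. What survives is the bottom $\{z=0\}$, where $-\lim_{z\to0}z^a\partial_z E_v=(-\Delta)^s v$ in the appropriate sense, giving $\int_{\R^n}\varphi\,(-\Delta)^s v\,dx$, which by the integration-by-parts formula for the fractional Laplacian equals the claimed Gagliardo integral. The fact that $(-\Delta)^s v$ is well-defined and bounded here uses exactly $v\in W^{2,\infty}$ with $\alpha>1-2s$ as noted in the excerpt.

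\textit{Step 3: Second equality, replacing $E_v+\Phi$ by $E_{v+\varphi}$ in the infimum.} Here $\Phi$ ranges over $C^\infty_0(\mathcal B_R)$ with $\Phi(x,0)=\varphi(x)$. Writing $\Phi=E_\varphi-(E_\varphi-\Phi)$ won't quite work since $E_\varphi\notin C^\infty_0$; instead expand the functional around $E_{v+\varphi}$: for competitor $W:=E_v+\Phi$ with $W(x,0)=(v+\varphi)(x)$, set $\Psi:=W-E_{v+\varphi}$, which vanishes on $\{z=0\}$, and compute
$|\nabla W|^2-|\nabla E_v|^2=\big(|\nabla E_{v+\varphi}|^2-|\nabla E_v|^2\big)+2\nabla E_{v+\varphi}\cdot\nabla\Psi+|\nabla\Psi|^2$.
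Integrating over $\mathcal B_R^+$, the cross term $\int_{\mathcal B_R^+}z^a\nabla E_{v+\varphi}\cdot\nabla\Psi$ is nonnegative up to a boundary term that one argues is negligible — actually, the cleaner route is: $E_{v+\varphi}$ is the minimizer of the Dirichlet energy among all extensions of $v+\varphi$ (this is \eqref{DIRmi}, or rather the local-ball version of it, which holds because $E_{v+\varphi}$ is $a$-harmonic and any competitor differs by a function vanishing on the relevant boundary pieces — here one must be a little careful and use that $E_{v+\varphi}$ restricted to $\mathcal B_R^+$ minimizes against perturbations vanishing on $\partial\mathcal B_R^+\cap\{z>0\}$ but NOT on $\{z=0\}$... this needs the first-variation/$a$-harmonicity argument again). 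So for every admissible $\Phi$,
$\int_{\mathcal B_R^+}z^a|\nabla(E_v+\Phi)|^2\ge\int_{\mathcal B_R^+}z^a|\nabla E_{v+\varphi}|^2$ plus a controlled error,
giving "$\le$" after subtracting $\int z^a|\nabla E_v|^2$. The reverse inequality "$\ge$" is obtained by exhibiting a near-optimal competitor: take $\Phi_R:=\eta_R\,(E_\varphi)$ where $\eta_R$ is a cutoff equal to $1$ on $\mathcal B_{R/2}$ and supported in $\mathcal B_R$; then $\Phi_R(x,0)=\varphi(x)$ for $R$ large (since $\operatorname{supp}\varphi\subset B_{R/2}$), $\Phi_R\in C^\infty_0(\mathcal B_R)$, and using the polynomial decay of $E_\varphi$ one checks $\int_{\mathcal B_R^+}z^a|\nabla(E_v+\Phi_R)|^2-\int_{\mathcal B_R^+}z^a|\nabla E_v|^2\to\iint\cdots$, i.e. it converges to the value found in Steps 1–2. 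Matching the two inequalities in the limit $R\to\infty$ gives the second equality.

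\textit{Main obstacle.} The crux throughout is justifying the integrations by parts in the unbounded half-space region $\mathcal B_R^+$ when the functions involved have no integrable Dirichlet energy: one never integrates the divergent pieces directly, but only the \emph{differences}, and the mechanism making this work is the polynomial ($|x|^{-n-2s}$) decay of $E_\varphi$ together with the uniform bound $\|z^a\nabla E_v\|_{L^\infty(\R^{n+1}_+)}<\infty$ coming from $v\in W^{2,\infty}$. Controlling the lateral boundary terms on $\partial B_R\times(0,R)$ as $R\to\infty$ — showing they are $o(1)$ rather than merely bounded — is the delicate estimate, and it is where the hypothesis $n\ge2$ and the precise Poisson-kernel decay enter. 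Once that decay bookkeeping is in place, all three equalities follow from the same polarization-and-integration-by-parts scheme.
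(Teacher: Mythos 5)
Your overall strategy (polarize, integrate by parts, use the cutoff competitor $\eta_R E_\varphi$) is the same as the paper's, and Steps 1 and 3 are essentially right in outline. But Step 2 contains a genuine gap: you integrate by parts in the wrong direction. You use the $a$-harmonicity of $E_v$ to write
$\int_{\mathcal B_R^+}z^a\nabla E_v\cdot\nabla E_\varphi=\int_{\partial\mathcal B_R^+}z^a E_\varphi\,\partial_\nu E_v$,
and then claim the lateral contribution vanishes because ``$E_\varphi$ decays polynomially while $z^a|\nabla E_v|$ is bounded.'' Run the numbers on $L_R=\partial B_R\times(0,R)$: there $|E_\varphi(x,z)|\le Cz^{2s}(|x|^2+z^2)^{-\frac{n+2s}{2}}\le Cz^{2s}R^{-(n+2s)}$ (and this is sharp when $\int\varphi\neq0$), while $\partial_\nu E_v$ is a \emph{horizontal} derivative, hence bounded but with no decay. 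So the integrand is of size $z^{a+2s}R^{-(n+2s)}=zR^{-(n+2s)}$, and integrating over $L_R$ gives $CR^{n-1}\cdot R^2\cdot R^{-(n+2s)}=CR^{1-2s}$, which does \emph{not} tend to zero for $s\le\frac12$ --- precisely the regime this paper is about. (With your cruder bound $z^a|\nabla E_v|\le C$ you get $O(1)$, still not $o(1)$.) The paper avoids this by integrating by parts the other way: since $E_\varphi$ is $a$-harmonic, $z^a\nabla E_v\cdot\nabla E_\varphi={\rm div}\big(z^aE_v\nabla E_\varphi\big)$, so the boundary term pairs the merely bounded $E_v$ with $z^a\nabla E_\varphi=O\big((|x|^2+z^2)^{-\frac{n+2s}{2}}\big)$, which yields $O(R^n\cdot R^{-(n+2s)})=O(R^{-2s})\to0$ (this is estimate (2.12) in the proof). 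The direction of the integration by parts is not a matter of taste here; it is the whole point.

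A second, smaller under-treatment: after the bottom trace you invoke ``the integration-by-parts formula for the fractional Laplacian'' to identify $\int\varphi\,(-\Delta)^sv$ with the Gagliardo cross term. For $v$ bounded but non-decaying this identity (equivalently the symmetry $\int\varphi(-\Delta)^sv=\int v(-\Delta)^s\varphi$ up to the bilinear correction) is not free; the paper spends a substantial part of the proof on it, truncating $v_k=\tau_kv$ and estimating the bilinear form $B(\tau_k,v)=O(k^{-1}+k^{-2s})$ to pass to the limit. You should either reproduce that truncation argument or give another justification. With the integration by parts reversed and the truncation step supplied, your scheme matches the paper's proof.
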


\begin{proof} For concreteness, we
suppose that the support of~$\varphi$
lies in~$B_1$.
We take~$\tau\in C^\infty_0(B_2,\,[0,1])$,
with~$\tau=1$ in~$B_1$, and we let
\begin{equation}\label{v k} \tau_k(x):=\tau\left(\frac{x}{k}\right)
\quad{\mbox{ and }}\quad
v_k(x):=\tau_k(x)\,v(x).\end{equation}
We also set~$w_k:= v-v_k$.
In this way, $w_k$ is bounded, uniformly
Lipschitz and
vanishes in~$B_k$.
In particular,
\begin{equation}\label{9:PAA}
\frac{\big|w_k(x)-w_k(y)\big|\,\big|
\varphi(x)-\varphi(y)\big|
}{|x-y|^{n+2s}}\le
\frac{C\,\min\{1,|x-y|\}\,\big|
\varphi(x)-\varphi(y)\big|
}{|x-y|^{n+2s}}\in L^1(\R^{2n}).\end{equation}
Furthermore, we have that
\begin{eqnarray*}
&& \iint_{\R^{2n}}
\frac{|(v+\varphi)(x)-(v+\varphi)(y)|^2-
|v(x)-v(y)|^2
}{|x-y|^{n+2s}}\,dx\,dy\\&&\qquad
-
\iint_{\R^{2n}}
\frac{|(v_k+\varphi)(x)-(v_k+\varphi)(y)|^2-
|v_k(x)-v_k(y)|^2
}{|x-y|^{n+2s}}\,dx\,dy
\\ &=&2
\iint_{\R^{2n}}
\frac{\big(v(x)-v(y)\big)\big(
\varphi(x)-\varphi(y)\big)
}{|x-y|^{n+2s}}\,dx\,dy\\&&\qquad
-2\iint_{\R^{2n}}
\frac{\big(v_k(x)-v_k(y)\big)\big(
\varphi(x)-\varphi(y)\big)
}{|x-y|^{n+2s}}\,dx\,dy
\\ &=&
2\iint_{\R^{2n}}
\frac{\big(w_k(x)-w_k(y)\big)\big(
\varphi(x)-\varphi(y)\big)
}{|x-y|^{n+2s}}\,dx\,dy
.\end{eqnarray*}
This, \eqref{9:PAA}
and the Dominated Convergence
Theorem give that
\begin{equation}\label{K9:00:01}
\begin{split}&
\lim_{k\to+\infty}
\iint_{\R^{2n}}
\frac{|(v_k+\varphi)(x)-(v_k+\varphi)(y)|^2-
|v_k(x)-v_k(y)|^2
}{|x-y|^{n+2s}}\,dx\,dy
\\&\qquad=
\iint_{\R^{2n}}
\frac{|(v+\varphi)(x)-(v+\varphi)(y)|^2-
|v(x)-v(y)|^2
}{|x-y|^{n+2s}}\,dx\,dy
.\end{split}\end{equation}
Also,
$$ \frac{\big|v(x)-v(y)\big|\,\big|
\varphi(x)-\varphi(y)\big|
}{|x-y|^{n+2s}}\le
\frac{C\,\min\{1,|x-y|\}\,\big|
\varphi(x)-\varphi(y)\big|
}{|x-y|^{n+2s}}\in L^1(\R^{2n}),$$
and therefore
\begin{equation}\label{K9:00:02}
\begin{split}&
\iint_{\R^{2n}}
\frac{|(v+\varphi)(x)-(v+\varphi)(y)|^2-
|v(x)-v(y)|^2
}{|x-y|^{n+2s}}\,dx\,dy\\&\qquad=
\iint_{\R^{2n}}
\frac{
\big(v(x)-v(y)\big)\,\big(
\varphi(x)-\varphi(y)\big)
}{|x-y|^{n+2s}}\,dx\,dy+
\iint_{\R^{2n}}\frac{|\varphi(x)-\varphi(y)|^2}{|x-y|^{n+2s}}\,dx\,dy
<+\infty
.\end{split}\end{equation}
On the other hand, recalling~\eqref{0djhicseodfeeyyeyeye}, 
\[
z^a |\nabla P(x,z)|\le
\frac{Cz(|x|+z)}{(|x|^2+z^2)^{\frac{n+2s+2}{2}}}
+\frac{C}{(|x|^2+z^2)^{\frac{n+2s}{2}}}
\le
\frac{C}{(|x|^2+z^2)^{\frac{n+2s}{2}}}
.\]
This implies that
\begin{eqnarray*}
&& z^a |\nabla E_\varphi(x,z)|=
\left| z^a \int_{\R^n} \nabla
P(x-y,z)\,\varphi(y)\,dy
\right|\\&&\qquad\le
C \int_{B_1} |\nabla P(x-y,z)|\,dy
\le C\int_{B_1}
\frac{dy}{(|x-y|^2+z^2)^{\frac{n+2s}{2}}}
.\end{eqnarray*}
Hence, for any~$x\in\R^n\setminus B_2$,
\begin{equation}\label{HENCE1}
z^a |\nabla E_\varphi(x,z)|
\le C\int_{B_1}
\frac{dy}{(|x|^2+z^2)^{\frac{n+2s}{2}}}
= \frac{C}{(|x|^2+z^2)^{\frac{n+2s}{2}}}
\end{equation}
and, if~$x\in B_2$,
\begin{equation}\label{HENCE2}
z^a |\nabla E_\varphi(x,z)|\le
C\int_{B_1}
\frac{dy}{(0+z^2)^{\frac{n+2s}{2}}}
\le
\frac{C}{z^{n+2s}}
.\end{equation}
We also notice that
\begin{equation}\label{6bis}\begin{split}& z^a\,|\nabla E_\varphi(x,z)|\le
z^a\,\left| \int_{B_1} P(y,z)\,\nabla\varphi(x-y)\,dy\right|
\\&\qquad\le C z^a\int_{\R^n} P(y,z)\,dy\le Cz^a\in L^1(B_2\times(0,2)).\end{split}\end{equation}
In addition, $\| E_{v_k}\|_{L^\infty(\R^{n+1}_+)}\le
\|v_k\|_{L^\infty(\R^n)}$;
as a consequence
of these observations, setting
\begin{equation}\label{elle R}
L_R:=
(\partial B_R)\times (0,R)\quad{\mbox{
and }}\quad U_R:= B_R\times\{R\}, \end{equation}we have that
\begin{equation}\label{889AO}\begin{split}&
\lim_{R\to+\infty}
\int_{L_R\cup U_R }
z^a |E_{v_k}(x,z)|\,|
\partial_\nu 
E_{\varphi}(x,z)|\,d{\mathcal{H}}^{n}(x,z)
\le\lim_{R\to+\infty}
\int_{L_R\cup U_R }
\frac{C\,d{\mathcal{H}}^{n}(x,z)}{
(|x|^2+z^2)^{\frac{n+2s}{2}}}
\\ &\qquad\qquad\le\lim_{R\to+\infty}
\int_{L_R\cup U_R }
\frac{C\,d{\mathcal{H}}^{n}(x,z)}{
R^{n+2s}} \le \lim_{R\to+\infty}
\frac{CR^n}{R^{n+2s}}=0,
\end{split}\end{equation}
where~$\partial_\nu$ denotes the
external normal derivative to the boundary
of~${\mathcal{B}}_R^+$.

Accordingly,
\begin{eqnarray*}
&& \lim_{R\to+\infty}
\int_{ {\mathcal{B}}_R^+ }
z^a\big(|\nabla E_{v_k+\varphi}(x,z)|^2
-|\nabla E_{v_k}(x,z)|^2\big)\,dx\,dz\\
&=&
\lim_{R\to+\infty}
\int_{ {\mathcal{B}}_R^+ }
z^a\big(|\nabla E_{\varphi}(x,z)|^2
+2\nabla E_\varphi(x,y)\cdot
\nabla E_{v_k}(x,z)\big)\,dx\,dz
\\ &=&
\lim_{R\to+\infty}
\int_{ {\mathcal{B}}_R^+ }
z^a |\nabla E_{\varphi}(x,z)|^2
+2{\rm div}\,\big(z^a
E_{v_k}(x,y)
\nabla E_{\varphi}(x,z)\big) \,dx\,dz\\
&=&
\lim_{R\to+\infty}
\int_{ {\mathcal{B}}_R^+ }
z^a |\nabla E_{\varphi}(x,z)|^2\,dx\,dz
+2\int_{\partial {\mathcal{B}}_R^+ }
z^a E_{v_k}(x,z)
\partial_\nu 
E_{\varphi}(x,z)\,d{\mathcal{H}}^{n}(x,z)\\
&=&
\lim_{R\to+\infty}
\int_{ {\mathcal{B}}_R^+ }
z^a |\nabla E_{\varphi}(x,z)|^2\,dx\,dz
+2\int_{B_R} v_k(x)\,(-\Delta)^s\varphi(x)\,dx
\\ &=&
\int_{ \R^{n+1}_+}
z^a |\nabla E_{\varphi}(x,z)|^2\,dx\,dz
+2\int_{\R^n} v_k(x)
\,(-\Delta)^s\varphi(x)\,dx.
\end{eqnarray*}
Similarly,
\begin{eqnarray*}
&& \lim_{R\to+\infty}
\int_{ {\mathcal{B}}_R^+ }
z^a\big(|\nabla E_{v+\varphi}(x,z)|^2
-|\nabla E_{v}(x,z)|^2\big)\,dx\,dz\\
\\&=&
\int_{ \R^{n+1}_+}
z^a |\nabla E_{\varphi}(x,z)|^2\,dx\,dz
+2\int_{\R^n} v(x)
\,(-\Delta)^s\varphi(x)\,dx.\end{eqnarray*}
Since~$|v_k (-\Delta)^s\varphi|\le|v(-\Delta)^s\varphi|\le\frac{C}{1+|x|^{n+2s}}\in L^1(\R^n)$,
we thus obtain that
\begin{equation}\label{89:0128esd}
\begin{split}
&\lim_{k\to+\infty}
\int_{\R^{n+1}_+}
z^a\big(|\nabla E_{v_k+\varphi}(x,z)|^2
-|\nabla E_{v_k}(x,z)|^2\big)\,dx\,dz
\\=\;&\lim_{k\to+\infty}\lim_{R\to+\infty}
\int_{ {\mathcal{B}}_R^+ }
z^a\big(|\nabla E_{v_k+\varphi}(x,z)|^2
-|\nabla E_{v_k}(x,z)|^2\big)\,dx\,dz
\\
=\;&\lim_{k\to+\infty}
\left[\int_{ \R^{n+1}_+}
z^a |\nabla E_{\varphi}(x,z)|^2\,dx\,dz
+2\int_{\R^n} v_k(x)
\,(-\Delta)^s\varphi(x)\,dx
\right]\\ =\;&
\int_{ \R^{n+1}_+}
z^a |\nabla E_{\varphi}(x,z)|^2\,dx\,dz
+2\int_{\R^n} v(x)
\,(-\Delta)^s\varphi(x)\,dx\\
=\;&
\lim_{R\to+\infty}
\int_{ {\mathcal{B}}_R^+ }
z^a\big(|\nabla E_{v+\varphi}(x,z)|^2
-|\nabla E_{v}(x,z)|^2\big)\,dx\,dz.
\end{split}\end{equation}
Moreover, from~\eqref{COMP:AK}, we know that
\begin{eqnarray*}
&&  
\iint_{\R^{2n}}
\frac{|(v_k+\varphi)(x)-(v_k+\varphi)(y)|^2-
|v_k(x)-v_k(y)|^2
}{|x-y|^{n+2s}}\,dx\,dy\\&=&
\lim_{R\to+\infty}
\int_{ {\mathcal{B}}_R^+ }
z^a\big(|\nabla E_{v_k+\varphi}(x,z)|^2
-|\nabla E_{v_k}(x,z)|^2\big)\,dx\,dz
.\end{eqnarray*}
Putting together this with~\eqref{K9:00:01},
\eqref{K9:00:02} and~\eqref{89:0128esd}, we
conclude that
\begin{eqnarray*} +\infty&>& 
\iint_{\R^{2n}}
\frac{|(v+\varphi)(x)-(v+\varphi)(y)|^2-
|v(x)-v(y)|^2
}{|x-y|^{n+2s}}\,dx\,dy\\&=&
\lim_{R\to+\infty}
\int_{ {\mathcal{B}}_R^+ }
z^a\big(|\nabla E_{v+\varphi}(x,z)|^2
-|\nabla E_v(x,z)|^2\big)\,dx\,dz.\end{eqnarray*}
Therefore, to complete the proof of the desired claim,
it remains to show that
\begin{equation}\label{0OAPQO182}
\begin{split}
&\lim_{R\to+\infty}
\int_{ {\mathcal{B}}_R^+ }
z^a\big(|\nabla E_{v+\varphi}(x,z)|^2
-|\nabla E_v(x,z)|^2\big)\,dx\,dz
\\
=\;& \lim_{R\to+\infty} \inf_{ {\Phi\in
C^\infty_0({\mathcal{B}}_R)}\atop{\Phi(x,0)=\varphi(x)}}
\int_{ {\mathcal{B}}_R^+ }
z^a\big(|\nabla (E_{v}+\Phi)(x,z)|^2
-|\nabla E_v(x,z)|^2\big)\,dx\,dz
.\end{split}\end{equation}
To this end,
we take~$\theta\in C^\infty_0({\mathcal{B}}_1,\,[0,1])$
with~$\theta=1$ in~${\mathcal{B}}_{1/2}$
and we set~$\theta_R(x,z):=\theta\left(\frac{x}{R},\frac{z}{R}\right)$
and~$\Phi_R:=E_\varphi \theta_R$.
We observe that
$$ |E_\varphi(x,z)|\le \int_{B_1}
\frac{C z^{2s}}{(|x-y|^2+z^2)^{\frac{n+2s}2}}\,dy$$
and therefore, if~$|x|\ge2$,
\begin{equation*} 
|E_\varphi(x,z)|\le 
\frac{C z^{2s}}{(|x|^2+z^2)^{\frac{n+2s}2}}\le
\frac{C z^{2s}}{(1+|x|^2+z^2)^{\frac{n+2s}2}}
.\end{equation*}
Similarly, if~$|x|\le2$ and~$z\ge1$,
\begin{equation*}
|E_\varphi(x,z)|\le\int_{ B_1}
\frac{C z^{2s}}{(0+z^2)^{\frac{n+2s}2}}\,dy\le
\frac{C z^{2s}}{(1+|x|^2+z^2)^{\frac{n+2s}2}},
\end{equation*}
up to renaming~$C$.
Also, if~$|x|\le2$ and~$z\in(0,2)$,
$$ |E_\varphi(x,z)|\le C\int_{\R^n}P(x-y,z)\,dy=C.
$$
In view of these estimates, we have that
\begin{equation}\label{STIME:EPHI1}
\begin{split}& \int_{\R^{n+1}_+} z^a |E_\varphi(x,z)|^2\,dx\,dz
\le
C+\int_{\R^{n+1}_+\setminus {\mathcal{B}}_2} z^a |E_\varphi(x,z)|^2\,dx\,dz\\
&\qquad\le C+\int_{\R^{n+1}_+\setminus {\mathcal{B}}_2} 
\frac{C z^{1+2s}}{(1+|x|^2+z^2)^{{n+2s}}}
\,dx\,dz\\&\qquad
\le
C+\int_{\R^{n+1}_+\setminus {\mathcal{B}}_2} 
\frac{C }{(1+|x|^2+z^2)^{{n+s-\frac12}}}
\,dx\,dz\le C,
\end{split}\end{equation}
since~$2n+2s-1>n+1$.
%% , and so
%% \begin{equation}
%% \int_{\R^{n+1}_+\setminus{\mathcal{B}}_{R/2}} z^a |E_\varphi(x,z)|^2\,dx\,dz\le\epsilon(R),
%% \end{equation}
%% with~$\epsilon(R)$ infinitesimal as~$R\to+\infty$.

Similarly, recalling~\eqref{HENCE1},
\eqref{HENCE2} and~\eqref{6bis},
\begin{equation*}
\begin{split}& \int_{\R^{n+1}_+} z^a |\nabla E_\varphi(x,z)|^2\,dx\,dz
\\ \le\;& C+\int_{ \{(x,z):\, x\in B_2,\, z>1\}}
z^{-a} \big(z^a |\nabla E_\varphi(x,z)|\big)^2\,dx\,dz
+\int_{ \{(x,z): \, x\in \R^n\setminus B_2,\, z\in(0,1]\}}
z^{-a} \big(z^a |\nabla E_\varphi(x,z)|\big)^2\,dx\,dz
\\&\qquad+\int_{ \{(x,z):\, x\in \R^n\setminus B_2,\,z>1\}}
z^{-a} \big(z^a |\nabla E_\varphi(x,z)|\big)^2\,dx\,dz
\\ \le\;& C+
\int_{ \{(x,z):\, x\in B_2,\, z>1\}}
\frac{C}{z^{2n+1+2s}}\,dx\,dz
+\int_{ \{(x,z):\, x\in \R^n\setminus B_2,\, z\in(0,1]\}}
\frac{C}{z^{1-2s} (|x|^2+0)^{n+2s}}\,dx\,dz\\&\qquad
+\int_{ \{(x,z):\, x\in \R^n\setminus B_2,\,z>1\}}
\frac{C}{(|x|^2+z^2)^{n+2s}}\,dx\,dz
\\ \le\;&C,
\end{split}
\end{equation*}
and therefore
\begin{equation}\label{STIME:EPHI2}
\int_{\R^{n+1}_+\setminus{\mathcal{B}}_{R/2}} z^a |\nabla E_\varphi(x,z)|^2\,dx\,dz\le\delta(R),
\end{equation}
with~$\delta(R)$ infinitesimal as~$R\to+\infty$.

In addition,
\begin{eqnarray*} \Big| |\nabla \Phi_R|^2-|\nabla E_\varphi|^2\Big|
&\le& E_\varphi^2|\nabla\theta_R|^2
+|\nabla E_\varphi|^2 |1-\theta_R^2|
+2|E_\varphi| |\theta_R|
|\nabla E_\varphi||\nabla\theta_R|\\
&\le& \frac{C E_\varphi^2}{R^2 }+
C|\nabla E_\varphi|^2 \chi_{\R^{n+1}_+\setminus {\mathcal{B}}_{R/2}}.
\end{eqnarray*}
Therefore, in light of~\eqref{STIME:EPHI1} and~\eqref{STIME:EPHI2},
\begin{eqnarray*}&& \left|
\int_{\R^{n+1}_+} z^a |\nabla \Phi_R(x,z)|^2\,dx\,dz
-
\int_{\R^{n+1}_+} z^a |\nabla E_\varphi(x,z)|^2\,dx\,dz\right|\\ &\le&
\frac{C}{R^2}\int_{\R^{n+1}_+} z^a
|E_\varphi(x,z)|^2\,dx\,dz+
C\int_{\R^{n+1}_+\setminus {\mathcal{B}}_{R/2}} z^a
|\nabla E_\varphi(x,z)|^2 \,dx\,dz \\&\le&
\frac{C}{R^2}+\delta(R).
\end{eqnarray*}
Consequently, we find that
\begin{equation}\label{PAal203948:PP}
\begin{split}
& \lim_{R\to+\infty} \inf_{ {\Phi\in
C^\infty_0({\mathcal{B}}_R)}\atop{\Phi(x,0)=\varphi(x)}}
\int_{ {\mathcal{B}}_R^+ }
z^a\big(|\nabla (E_{v}+\Phi)(x,z)|^2
-|\nabla E_v(x,z)|^2\big)\,dx\,dz
\\ \le\;&
\lim_{R\to+\infty}
\int_{ {\mathcal{B}}_R^+ }
z^a\big(|\nabla (E_{v}+\Phi_R)(x,z)|^2
-|\nabla E_v(x,z)|^2\big)\,dx\,dz
\\
=\;&
\lim_{R\to+\infty}
\int_{ {\mathcal{B}}_R^+ }
z^a\big(|\nabla \Phi_R(x,z)|^2 +2\nabla E_v(x,z)\cdot\nabla\Phi_R(x,z)\big)\,dx\,dz
\\ 
\le\;&
\lim_{R\to+\infty} \left[ \frac{C}{R^2}+\delta(R)+
\int_{ {\mathcal{B}}_R^+ }
z^a\big(|\nabla E_\varphi(x,z)|^2 +2\nabla E_v(x,z)\cdot\nabla
E_\varphi(x,z)\big)\,dx\,dz\right.\\&\qquad
\left.+2
\int_{ {\mathcal{B}}_R^+ }
z^a\nabla E_v(x,z)\cdot\nabla\big(\Phi_R(x,z)-E_\varphi(x,z)\big)\,dx\,dz\right]\\
\le\;&
\lim_{R\to+\infty} \left[ \frac{C}{R^2}+\delta(R)+
\int_{ {\mathcal{B}}_R^+ }
z^a\big(|\nabla E_\varphi(x,z)|^2 +2\nabla E_v(x,z)\cdot\nabla
E_\varphi(x,z)\big)\,dx\,dz\right.\\&\qquad\left.
+2\sup_{ {\Phi\in
C^\infty_0({\mathcal{B}}_R)}\atop{\Phi(x,0)=\varphi(x)}}
\int_{ {\mathcal{B}}_R^+ }
z^a\nabla E_v(x,z)\cdot\nabla\big(\Phi(x,z)-E_\varphi(x,z)\big)\,dx\,dz\right]
.\end{split}\end{equation}
Now we claim that
\begin{equation}\label{PAal203948}
\lim_{R\to+\infty} \sup_{ {\Phi\in
C^\infty_0({\mathcal{B}}_R)}\atop{\Phi(x,0)=\varphi(x)}}
\left|\int_{ {\mathcal{B}}_R^+ }
z^a\nabla E_v(x,z)\cdot\nabla\big(\Phi(x,z)-E_\varphi(x,z)\big)\,dx\,dz\right|
=0.
\end{equation}
To check this, we recall the notation in~\eqref{elle R}
and observe that
\begin{equation}\label{012owe2eudyf8i:00}
\begin{split}
&\lim_{R\to+\infty}\sup_{ {\Phi\in
C^\infty_0({\mathcal{B}}_R)}\atop{\Phi(x,0)=\varphi(x)}}
\left|\int_{ {\mathcal{B}}_R^+ }
z^a\nabla E_v(x,z)\cdot\nabla\big(\Phi(x,z)-E_\varphi(x,z)\big)\,dx\,dz\right|\\
=\,&
\lim_{R\to+\infty}\sup_{ {\Phi\in
C^\infty_0({\mathcal{B}}_R)}\atop{\Phi(x,0)=\varphi(x)}}\left|
\int_{ {\mathcal{B}}_R^+ } {\rm div}\,\Big(
z^a \Phi(x,z) \nabla E_v(x,z)\Big)\,dx\,dz
-\int_{ {\mathcal{B}}_R^+ } {\rm div}\,\Big(
z^a E_v(x,z)\nabla E_\varphi(x,z)\Big)\,dx\,dz\right|\\
=\,&
\lim_{R\to+\infty}\left| \int_{B_R} \varphi(x)\, (-\Delta)^s v(x)\,dx
- \int_{B_R} v(x)\, (-\Delta)^s \varphi(x)\,dx\right.\\
&\quad\qquad\left.-\int_{L_R\cup U_R} 
z^a E_v(x,z)\partial_\nu E_\varphi(x,z)\,d{\mathcal{H}}^n(x,z)\right|
.\end{split}
\end{equation}
Also, as in~\eqref{889AO}, we have that
\begin{equation*}
\lim_{R\to+\infty}
\int_{L_R\cup U_R }
z^a |E_{v}(x,z)|\,|
\partial_\nu 
E_{\varphi}(x,z)|\,d{\mathcal{H}}^{n}(x,z)=0.
\end{equation*}
Hence, fixing~$k$ and
recalling the notation in~\eqref{v k}, we derive from~\eqref{012owe2eudyf8i:00}
that
\begin{eqnarray*}
&&\lim_{R\to+\infty}\sup_{ {\Phi\in
C^\infty_0({\mathcal{B}}_R)}\atop{\Phi(x,0)=\varphi(x)}}\left|
\int_{ {\mathcal{B}}_R^+ }
z^a\nabla E_v(x,z)\cdot\nabla\big(\Phi(x,z)-E_\varphi(x,z)\big)\,dx\,dz\right| \\
&=& \left|\int_{\R^n} \varphi(x)\, (-\Delta)^s v(x)\,dx
- \int_{\R^n} v(x)\, (-\Delta)^s \varphi(x)\,dx\right| \\
&=& \left|\int_{\R^n} \varphi(x)\, (-\Delta)^s v_k(x)\,dx
- \int_{\R^n} v_k(x)\, (-\Delta)^s \varphi(x)\,dx
\right.\\
&&\qquad+\left. \int_{\R^n} \varphi(x)\, (-\Delta)^s w_k(x)\,dx
- \int_{\R^n} w_k(x)\, (-\Delta)^s \varphi(x)\,dx\right|
\\ &=&\left|
\int_{\R^n} \varphi(x)\, (-\Delta)^s w_k(x)\,dx
- \int_{\R^n} w_k(x)\, (-\Delta)^s \varphi(x)\,dx\right|.
\end{eqnarray*}
Since~$|(-\Delta)^s \varphi(x)|\le\frac{C}{1+|x|^{n+2s}}\in L^1(\R^n)$,
we can take the limit as~$k\to+\infty$ and use the Dominated Convergence Theorem,
to obtain that
\begin{equation}\label{udofjvw9etgierufgh}
\begin{split}
&\lim_{R\to+\infty}\sup_{ {\Phi\in
C^\infty_0({\mathcal{B}}_R)}\atop{\Phi(x,0)=\varphi(x)}}
\left|\int_{ {\mathcal{B}}_R^+ }
z^a\nabla E_v(x,z)\cdot\nabla\big(\Phi(x,z)-E_\varphi(x,z)\big)\,dx\,dz\right|\\
=\;&\lim_{k\to+\infty}\left| \int_{\R^n} \varphi(x)\, (-\Delta)^s w_k(x)\,dx
- \int_{\R^n} w_k(x)\, (-\Delta)^s \varphi(x)\,dx\right|\\
=\;&\lim_{k\to+\infty}\left| \int_{\R^n} \varphi(x)\, (-\Delta)^s w_k(x)\,dx\right|
\\ =\;&
\lim_{k\to+\infty}\left| \int_{\R^n} \varphi(x)\, (-\Delta)^s v(x)\,dx
-\int_{\R^n} \varphi(x)\, (-\Delta)^s (\tau_k v)(x)\,dx\right|
\\ =\;& \left|
\int_{\R^n} \varphi(x)\, (-\Delta)^s v(x)\,dx
-
\lim_{k\to+\infty}\left[ \int_{\R^n} \tau_k(x)\,\varphi(x)\, (-\Delta)^s v(x)\,dx\right.\right.\\ &
\qquad\left.\left.+
\int_{\R^n} v(x)\,\varphi(x)\, (-\Delta)^s \tau_k(x)\,dx+
\int_{\R^n} \varphi(x)\, B(\tau_k, v)(x)\,dx\right]\right|
\\ =\;&\lim_{k\to+\infty}\left|\frac{1}{k^{2s}}
\int_{B_1} v(x)\,\varphi(x)\, (-\Delta)^s \tau\left(\frac{x}{k}\right)\,dx+
\int_{\R^n} \varphi(x)\, B(\tau_k, v)(x)\,dx\right|\\
\le\;& \lim_{k\to+\infty}\left[ \frac{C}{k^{2s}}
\int_{B_1} |v(x)|\,|\varphi(x)|\,dx+
\int_{\R^n} |\varphi(x)|\, |B(\tau_k, v)(x)|\,dx\right]
\\ =\;&\lim_{k\to+\infty}\int_{B_1} |\varphi(x)|\, |B(\tau_k, v)(x)|\,dx,
\end{split}\end{equation}
where
$$ B(f,g):= c\, \int_{\R^n} \frac{(f(x)-f(y))(g(x)-g(y))}{|x-y|^{n+2s}}\,dy,$$
for some~$c>0$, see e.g. page~636 in~\cite{MR3211862}
for such bilinear form.

Notice now that
\begin{eqnarray*}
|B(\tau_k, v)(x)|&\le&
C\int_{\R^n} \frac{\min\{ 1,\frac{|x-y|}{k}\}\,
\min\{ 1,|x-y|\}}{|x-y|^{n+2s}}\,dy\\
&\le&
\frac{C}{k} \int_{B_1(x)} \frac{|x-y|^2}{|x-y|^{n+2s}}\,dy
+
\frac{C}{k}\int_{B_k(x)\setminus B_1(x)} \frac{|x-y|}{|x-y|^{n+2s}}\,dy
+
C\int_{\R^n\setminus B_k(x)} \frac{dy}{|x-y|^{n+2s}}\\ &\le&\frac{C}{k}+\frac{C}{k^{2s}}.
\end{eqnarray*}
We plug this information into~\eqref{udofjvw9etgierufgh} and we obtain~\eqref{PAal203948},
as desired.

Now, we insert~\eqref{PAal203948} into~\eqref{PAal203948:PP} and we conclude that
\begin{eqnarray*}&& \lim_{R\to+\infty} \inf_{ {\Phi\in
C^\infty_0({\mathcal{B}}_R)}\atop{\Phi(x,0)=\varphi(x)}}
\int_{ {\mathcal{B}}_R^+ }
z^a\big(|\nabla (E_{v}+\Phi)(x,z)|^2
-|\nabla E_v(x,z)|^2\big)\,dx\,dz
\\ &&\qquad\le \lim_{R\to+\infty}
\int_{ {\mathcal{B}}_R^+ }
z^a\big(|\nabla E_\varphi(x,z)|^2 +2\nabla E_v(x,z)\cdot\nabla
E_\varphi(x,z)\big)\,dx\,dz\\
&&\qquad=
\lim_{R\to+\infty}
\int_{ {\mathcal{B}}_R^+ }
z^a\big(|\nabla E_{v+\varphi}(x,z)|^2 -|\nabla E_v(x,z)|^2\big)\,dx\,dz.
\end{eqnarray*}
Hence, to prove~\eqref{0OAPQO182}, it remains to show that
\begin{equation}\label{0OAPQO182:BIS}
\begin{split}
&\lim_{R\to+\infty}
\int_{ {\mathcal{B}}_R^+ }
z^a\big(|\nabla E_{v+\varphi}(x,z)|^2
-|\nabla E_v(x,z)|^2\big)\,dx\,dz
\\
\le \;& \lim_{R\to+\infty} \inf_{ {\Phi\in
C^\infty_0({\mathcal{B}}_R)}\atop{\Phi(x,0)=\varphi(x)}}
\int_{ {\mathcal{B}}_R^+ }
z^a\big(|\nabla (E_{v}+\Phi)(x,z)|^2
-|\nabla E_v(x,z)|^2\big)\,dx\,dz
.\end{split}\end{equation}
For this, we fix~$\Psi\in
C^\infty_0({\mathcal{B}}_R)$ with~$\Psi(x,0)=\varphi(x)$ and we use again~\eqref{PAal203948}
to see that
\begin{equation} \label{00:10023a:023948}
\begin{split}
& \int_{ {\mathcal{B}}_R^+ }
z^a\big(|\nabla (E_{v}+\Psi)(x,z)|^2
-|\nabla E_v(x,z)|^2\big)\,dx\,dz\\
=\;&
\int_{ {\mathcal{B}}_R^+ }
z^a\big(|\nabla \Psi(x,z)|^2+2z^a\nabla\Psi(x,z)\cdot\nabla E_v(x,z)\big)\,dx\,dz
\\
=\;&
\int_{ {\mathcal{B}}_R^+ }
z^a\big(|\nabla \Psi(x,z)|^2+2z^a\nabla E_\varphi(x,z)\cdot\nabla E_v(x,z)\big)\,dx\,dz
\\&\qquad+2
\int_{ {\mathcal{B}}_R^+ }
z^a\nabla E_v(x,z)\cdot\nabla\big(\Psi(x,z)-E_\varphi(x,z)\big)\,dx\,dz
\\ \ge\;&
\int_{ {\mathcal{B}}_R^+ }
z^a\big(|\nabla \Psi(x,z)|^2+2z^a\nabla E_\varphi(x,z)\cdot\nabla E_v(x,z)\big)\,dx\,dz
\\&\qquad-2\sup_{ {\Phi\in
C^\infty_0({\mathcal{B}}_R)}\atop{\Phi(x,0)=\varphi(x)}}
\left|\int_{ {\mathcal{B}}_R^+ }
z^a\nabla E_v(x,z)\cdot\nabla\big(\Phi(x,z)-E_\varphi(x,z)\big)\,dx\,dz\right|
\\ =\;&
\int_{ {\mathcal{B}}_R^+ }
z^a\big(|\nabla \Psi(x,z)|^2+2z^a\nabla E_\varphi(x,z)\cdot\nabla E_v(x,z)\big)\,dx\,dz
-\mu(R),
\end{split}\end{equation}
with~$\mu(R)$ independent of~$\Psi$ and infinitesimal as~$R\to+\infty$.

Now we take~$\Psi_*\in C^\infty_0({\mathcal{B}}_R)$ such that~$\Psi_*(x,0)=\varphi(x)$
that minimizes the functional~$\Psi\mapsto\int_{ {\mathcal{B}}_R^+ }z^a|\nabla\Psi(x,z)|^2\,dx\,dz$
in such class. Then, using the variational equation for minimizers
inside~${\mathcal{B}}_R^+$ and the fact that~$\Psi_*(x,0)=E_\varphi(x,0)$, we see that
\begin{eqnarray*}
&& \int_{ {\mathcal{B}}_R^+ }
z^a\big(|\nabla \Psi(x,z)|^2-|\nabla E_\varphi(x,z)|^2\big)\,dx\,dz\\
&\ge&
\int_{ {\mathcal{B}}_R^+ }
z^a\big(|\nabla \Psi_*(x,z)|^2-|\nabla E_\varphi(x,z)|^2\big)\,dx\,dz\\
&=&\int_{ {\mathcal{B}}_R^+ }
z^a\nabla\big(\Psi_*-E_\varphi\big)(x,z)\cdot
\nabla\big(\Psi_*+E_\varphi\big)(x,z)
\,dx\,dz\\
&=& \int_{ {\mathcal{B}}_R^+ }{\rm div}\,\Big(
z^a\big(\Psi_*-E_\varphi\big)(x,z)
\nabla\big(\Psi_*+E_\varphi\big)(x,z)\Big)
\,dx\,dz\\
&=& \int_{ L_R\cup U_R}
z^a\big(\Psi_*-E_\varphi\big)(x,z)
\partial_\nu\big(\Psi_*+E_\varphi\big)(x,z)\,d{\mathcal{H}}^n(x,z)\\
&=& -\int_{ L_R\cup U_R}
z^a E_\varphi(x,z)
\partial_\nu E_\varphi(x,z)\,d{\mathcal{H}}^n(x,z)
.
\end{eqnarray*}
Therefore, recalling~\eqref{HENCE1} and~\eqref{HENCE2}, we conclude that
\[
\int_{ {\mathcal{B}}_R^+ }
z^a\big(|\nabla \Psi(x,z)|^2-|\nabla E_\varphi(x,z)|^2\big)\,dx\,dz\ge -\int_{ L_R\cup U_R}
\frac{C}{R^{n+2s}}
\,d{\mathcal{H}}^n(x,z)\ge- \frac{C}{R^{2s}}.
\]
{F}rom this and~\eqref{00:10023a:023948} we thus obtain that
\begin{eqnarray*}
&& \int_{ {\mathcal{B}}_R^+ }
z^a\big(|\nabla (E_{v}+\Psi)(x,z)|^2
-|\nabla E_v(x,z)|^2\big)\,dx\,dz\\
&\ge& 
\int_{ {\mathcal{B}}_R^+ }
z^a\big(|\nabla E_\varphi(x,z)|^2+2z^a\nabla E_\varphi(x,z)\cdot\nabla E_v(x,z)\big)\,dx\,dz
-\mu(R)- \frac{C}{R^{2s}}\\
&=&
\int_{ {\mathcal{B}}_R^+ }
z^a\big(|\nabla E_{\varphi+v}(x,z)|^2
-|\nabla E_{v}(x,z)|^2
\big)\,dx\,dz-\mu(R)- \frac{C}{R^{2s}}.
\end{eqnarray*}
Since this is valid for any~$\Psi\in C^\infty_0({\mathcal{B}}_R)$ with~$\Psi(x,0)=\varphi(x)$,
we conclude that
\begin{eqnarray*}
&&\inf_{ {\Phi\in
C^\infty_0({\mathcal{B}}_R)}\atop{\Phi(x,0)=\varphi(x)}}
\int_{ {\mathcal{B}}_R^+ }
z^a\big(|\nabla (E_{v}+\Phi)(x,z)|^2
-|\nabla E_v(x,z)|^2\big)\,dx\,dz\\
&\ge&
\int_{ {\mathcal{B}}_R^+ }
z^a\big(|\nabla E_{v+\varphi}(x,z)|^2
-|\nabla E_v(x,z)|^2\big)\,dx\,dz-\mu(R)- \frac{C}{R^{2s}}
.\end{eqnarray*}
By taking the limit as~$R\to+\infty$,
we obtain~\eqref{0OAPQO182:BIS}, as desired,
and so we have
completed the proof of Proposition~\ref{0OAPQO182:P}.
\end{proof}

In view of Proposition~\ref{0OAPQO182:P}, 
we can now
relate the original and the extended energy functionals,
according to the following result:

\begin{corollary}\label{CPOR:EQ}
Let~$n\ge2$ and~$s\in(0,1)$.
For any~$v\in W^{2,\infty}(\R^n)$ it holds that
\begin{equation}\label{0o0oP}
\inf_{{R>0}\atop{\Phi\in C^\infty_0(\mathcal{B}_R)}}
{\mathcal{E}}_{\mathcal{B}_R}(E_v+\Phi)-{\mathcal{E}}_{\mathcal{B}_R}(E_v)\;
=\;
\inf_{{R>0}\atop{\varphi\in C^\infty_0(B_R)}}
{\mathcal{F}}_{B_R}(v+\varphi)-
{\mathcal{F}}_{B_R}(v).
\end{equation}
\end{corollary}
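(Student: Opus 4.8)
The plan is to show that both sides of~\eqref{0o0oP} equal the single quantity
$$\inf_{\varphi\in C^\infty_0(\R^n)}\Big(\tfrac12\,\mathcal I(\varphi)+\mathcal G(\varphi)\Big),$$
where
$$\mathcal G(\varphi):=\int_{\R^n}\big(F(v+\varphi)-F(v)\big)\,dx
\qquad{\mbox{and}}\qquad
\mathcal I(\varphi):=\iint_{\R^{2n}}\frac{|(v+\varphi)(x)-(v+\varphi)(y)|^2-|v(x)-v(y)|^2}{|x-y|^{n+2s}}\,dx\,dy.$$
Here $\mathcal G(\varphi)$ is a finite integral over the compact set $\operatorname{supp}\varphi$, since $v\in L^\infty(\R^n)$ and $F$ is continuous, and $\mathcal I(\varphi)\in\R$ by Proposition~\ref{0OAPQO182:P}. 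First I would record that, for each fixed $R>0$, the quantities $\mathcal F_{B_R}(v)$ and $\mathcal E_{\mathcal B_R}(E_v)$ are finite (so that the differences in~\eqref{0o0oP} are genuine real numbers and their quadratic and potential parts can be split off): for $\mathcal F_{B_R}(v)$ this uses only that $v$ is bounded and Lipschitz together with $s\in(0,1)$, while for $\mathcal E_{\mathcal B_R}(E_v)$ the one delicate term, $\int_{\mathcal B_R^+}z^a|\nabla E_v|^2$, is controlled by the standard pointwise bounds on the Poisson extension of the bounded Lipschitz function $v$ near $\{z=0\}$ together with $a=1-2s\in(-1,1)$.

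Next I would handle the right-hand side of~\eqref{0o0oP}. Fix $\varphi\in C^\infty_0(\R^n)$ with $\operatorname{supp}\varphi\subset B_{R_0}$ and let $R\ge R_0$. Since $\varphi$ vanishes outside $B_{R_0}$ we have $\int_{B_R}\big(F(v+\varphi)-F(v)\big)=\mathcal G(\varphi)$; moreover the integrand defining $\mathcal I(\varphi)$ vanishes whenever $\varphi(x)=\varphi(y)$, in particular on $B_R^c\times B_R^c$, so the integral over $Q_{B_R}$ coincides with the integral over $\R^{2n}$. Hence $\mathcal F_{B_R}(v+\varphi)-\mathcal F_{B_R}(v)=\tfrac12\,\mathcal I(\varphi)+\mathcal G(\varphi)$, with no dependence on $R\ge R_0$. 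Taking the infimum over $R>0$ and $\varphi\in C^\infty_0(B_R)$, which is the same as the infimum over $\varphi\in C^\infty_0(\R^n)$, shows that the right-hand side of~\eqref{0o0oP} equals $\inf_{\varphi}\big(\tfrac12\,\mathcal I(\varphi)+\mathcal G(\varphi)\big)$.

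For the left-hand side I would sort the competitors $\Phi\in C^\infty_0(\mathcal B_R)$ by their trace $\varphi:=\Phi(\cdot,0)\in C^\infty_0(B_R)$. For such a $\Phi$ one has, using $(E_v+\Phi)(\cdot,0)=v+\varphi$ and the compact support of $\varphi$,
$$\mathcal E_{\mathcal B_R}(E_v+\Phi)-\mathcal E_{\mathcal B_R}(E_v)=\tfrac12\int_{\mathcal B_R^+}z^a\big(|\nabla(E_v+\Phi)|^2-|\nabla E_v|^2\big)\,dx\,dz+\mathcal G(\varphi),$$
so minimizing over all $\Phi$ with this trace yields $\tfrac12\,I_R(\varphi)+\mathcal G(\varphi)$, where $I_R(\varphi)$ is exactly the inner infimum appearing in the last line of Proposition~\ref{0OAPQO182:P}. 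Extending a competitor $\Phi\in C^\infty_0(\mathcal B_{R_1})$ by zero to $\mathcal B_{R_2}$ with $R_2>R_1$ does not change the energy difference, because $|\nabla(E_v+\Phi)|^2-|\nabla E_v|^2=|\nabla\Phi|^2+2\nabla E_v\cdot\nabla\Phi$ vanishes on $\mathcal B_{R_2}^+\setminus\mathcal B_{R_1}^+$; hence $R\mapsto I_R(\varphi)$ is non-increasing, and combining this with the equality $\lim_{R\to+\infty}I_R(\varphi)=\mathcal I(\varphi)$ provided by Proposition~\ref{0OAPQO182:P} we obtain $\inf_{R\ge R_0}I_R(\varphi)=\mathcal I(\varphi)$. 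Therefore the left-hand side of~\eqref{0o0oP} equals
$$\inf_{\varphi\in C^\infty_0(\R^n)}\inf_{R\ge R_0(\varphi)}\Big(\tfrac12\,I_R(\varphi)+\mathcal G(\varphi)\Big)=\inf_{\varphi\in C^\infty_0(\R^n)}\Big(\tfrac12\,\mathcal I(\varphi)+\mathcal G(\varphi)\Big),$$
the same value found for the right-hand side, which gives~\eqref{0o0oP}.

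All the analytic content sits in Proposition~\ref{0OAPQO182:P}, already proved; what remains is bookkeeping. The only step I expect to cost a couple of lines is the finiteness of $\int_{\mathcal B_R^+}z^a|\nabla E_v|^2$ for fixed $R$ --- i.e.\ the integrability up to $\{z=0\}$ of the weighted gradient of the $a$-harmonic extension of a bounded Lipschitz function --- after which the reduction of the nested infima to a single infimum over traces $\varphi$ and the $R$-independence of the truncated quadratic and potential terms are routine.
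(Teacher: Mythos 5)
Your proof is correct and follows essentially the same route as the paper: both arguments fix the trace $\varphi$, reduce the extended infimum over $R$ and $\Phi$ to the $R\to+\infty$ limit handled by Proposition~\ref{0OAPQO182:P}, and identify the result with ${\mathcal{F}}_{B_R}(v+\varphi)-{\mathcal{F}}_{B_R}(v)$ before taking the infimum over $\varphi$. Your write-up is somewhat more explicit about the bookkeeping (finiteness of the individual energies and the $R$-independence of the truncated terms), which the paper leaves implicit, but there is no difference in substance.
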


\begin{proof}
 Notice that, given~$\varphi\in C^\infty_0(\R^n)$,
\begin{equation}\label{IDEPHI}
\begin{split}
&\inf_{ {{R>0}\atop{\Phi\in C^\infty_0(\mathcal{B}_R)}}\atop{\Phi(x,0)=\varphi(x)}}
{\mathcal{E}}_{\mathcal{B}_R}(E_v+\Phi)-{\mathcal{E}}_{\mathcal{B}_R}(E_v)\;
\\=\;& 
\inf_{ {{R>0}\atop{\Phi\in C^\infty_0(\mathcal{B}_R)}}\atop{\Phi(x,0)=\varphi(x)}}
\int_{\R^{n+1}_+} z^a\Big( |\nabla (E_v+\Phi)(x,z)|^2-|\nabla E_v(x,z)|^2
\Big)\,dx\,dz\\ &\qquad\qquad\qquad+
\int_{\R^n}\Big( F(v(x)+\varphi(x))-F(v(x))\Big)\,dx
\\ =\;&\lim_{R\to+\infty}
\inf_{ {{\Phi\in C^\infty_0(\mathcal{B}_R)}}\atop{\Phi(x,0)=\varphi(x)}}
\int_{\R^{n+1}_+} z^a\Big( |\nabla (E_v+\Phi)(x,z)|^2-|\nabla E_v(x,z)|^2\Big)
\,dx\,dz\\&\qquad\qquad\qquad+
\int_{\R^n}\Big( F(v(x)+\varphi(x))-F(v(x))\Big)\,dx
\\=\;&
\iint_{\R^{2n}}
\frac{|(v+\varphi)(x)-(v+\varphi)(y)|^2-
|v(x)-v(y)|^2
}{|x-y|^{n+2s}}\,dx\,dy\\&\qquad\qquad\qquad
+
\int_{\R^n}\Big( F(v(x)+\varphi(x))-F(v(x))\Big)\,dx\\
=\;&
{\mathcal{F}}_{B_R}(v+\varphi)-
{\mathcal{F}}_{B_R}(v),
\end{split}\end{equation}
thanks to Proposition~\ref{0OAPQO182:P}.
Since the identity in~\eqref{IDEPHI}
is valid for any~$\varphi\in C^\infty_0(\R^n)$
(i.e. for any~$R>0$ and any~$\varphi\in C^\infty_0(B_R)$), taking the infimum in such
class we obtain~\eqref{0o0oP}, as desired.
\end{proof}

Due to Corollary~\ref{CPOR:EQ},
the following equivalence result for minimizers holds true:

\begin{proposition}\label{EQUI DL}
$E_v$ is an extended local minimizer according to Definition~\ref{D:L:2}
if and only if~$v$ is a local minimizer according to Definition~\ref{D:L}.
\end{proposition}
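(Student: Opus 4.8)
The plan is to turn the pointwise (in $R$ and in the test function) minimality conditions of Definitions~\ref{D:L} and~\ref{D:L:2} into the vanishing of a single number, and then to read off the equivalence directly from Corollary~\ref{CPOR:EQ}.

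First I would record the harmless preliminary observation that all the energy increments involved are genuine real numbers: for $v\in W^{2,\infty}(\R^n)$ the Gagliardo term of $\mathcal{F}_{B_R}(v)$ is finite (the $B_R\times B_R$ contribution by the Lipschitz bound on $v$, the $B_R\times B_R^c$ contribution by the boundedness of $v$ together with $\int_{B_R^c}|x-y|^{-n-2s}\,dy<+\infty$), so $\mathcal{F}_{B_R}(v)<+\infty$; and for $\Phi\in C^\infty_0(\mathcal{B}_R)$ the difference $\mathcal{E}_{\mathcal{B}_R}(E_v+\Phi)-\mathcal{E}_{\mathcal{B}_R}(E_v)$ is finite, which is already part of the content of Proposition~\ref{0OAPQO182:P}. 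Hence the inequalities appearing in Definitions~\ref{D:L} and~\ref{D:L:2} compare finite quantities.

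Next I would use that the trivial competitors $\varphi\equiv 0$ and $\Phi\equiv 0$ are admissible, so that
\[
\inf_{{R>0}\atop{\varphi\in C^\infty_0(B_R)}}\Big(\mathcal{F}_{B_R}(v+\varphi)-\mathcal{F}_{B_R}(v)\Big)\le 0
\qquad\text{and}\qquad
\inf_{{R>0}\atop{\Phi\in C^\infty_0(\mathcal{B}_R)}}\Big(\mathcal{E}_{\mathcal{B}_R}(E_v+\Phi)-\mathcal{E}_{\mathcal{B}_R}(E_v)\Big)\le 0 .
\]
Therefore $v$ is a local minimizer in the sense of Definition~\ref{D:L} if and only if the first infimum equals $0$ (equivalently, is $\ge 0$), and $E_v$ is an extended local minimizer in the sense of Definition~\ref{D:L:2} if and only if the second infimum equals $0$.

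Finally I would invoke Corollary~\ref{CPOR:EQ}, which states precisely that these two infima coincide. Hence the first one vanishes if and only if the second one does, and this is exactly the asserted equivalence between $v$ being a local minimizer and $E_v$ being an extended local minimizer. I do not expect a genuine obstacle in this step: all of the analytic work — the energy renormalization, the decay and boundary-term estimates, and the identification of the infimum over extensions $\Phi$ with the $a$-harmonic extension $E_\varphi$ — has already been carried out in Proposition~\ref{0OAPQO182:P} and packaged in Corollary~\ref{CPOR:EQ}. The only points that deserve an explicit sentence are the reformulation of the two minimality notions as the vanishing of an infimum and the remark that the increments being compared are finite.
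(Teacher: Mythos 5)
Your argument is correct and is essentially identical to the paper's own proof: both reformulate each minimality notion as the nonnegativity (equivalently, the vanishing, since the zero competitor is admissible) of the corresponding infimum of energy increments, and then conclude from the equality of the two infima stated in Corollary~\ref{CPOR:EQ}. The extra remarks on finiteness of the increments are harmless and already implicit in Proposition~\ref{0OAPQO182:P}.
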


\begin{proof} We observe that~$E_v$ is an extended local minimizer according to Definition~\ref{D:L:2}
if and only if the first term in~\eqref{0o0oP} is nonnegative;
on the other hand, $v$ is a local minimizer according to Definition~\ref{D:L}
if and only if the last term in~\eqref{0o0oP} is nonnegative; since
the two terms in~\eqref{0o0oP} are equal, the desired result is established.
\end{proof}

In view of Proposition~\ref{EQUI DL} (see also Lemma 6.1 in~\cite{cabre-TAMS}),
it is natural to say that~$u$ is a stable solution of~\eqref{ALLEN-GEN}
if the second derivative of the associated energy functional is nonnegative,
according to the following setting:

\begin{definition}
Let~$u$ be a solution of~\eqref{ALLEN-GEN} in~$\R^n$. We say that~$u$ is stable if
$$ \int_{\R^{n+1}_+} z^a |\nabla \zeta(x,z)|^2\,dx\,dz+\int_{\R^n} F''(u(x))\,\zeta^2(x,0)\,dx\ge0$$
for any~$\zeta\in C^\infty_0(\R^{n+1})$.
\end{definition}

\section{Variational classification of $1$D solutions}\label{SEC:2}

The goal of this section is to establish the following result:

\begin{lemma}\label{CL:MO}
Let~$s\in(0,1)$ and
$v\in C^2(\R,[-1,1])$ be a stable solution of~$(-\Delta)^s v=f(v)$ in~$\R$.
Assume also that~$\dot v\ge0$. Then~$v$ is a local minimizer.
\end{lemma}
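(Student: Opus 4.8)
The plan is to pass to the extension, where the strictness of the monotonicity can be exploited through a foliation argument. By Proposition~\ref{EQUI DL} it suffices to show that the $a$-harmonic extension $E_v$ is an extended local minimizer in the sense of Definition~\ref{D:L:2}, i.e. $\mathcal{E}_{\mathcal{B}_R}(E_v+\Phi)\ge\mathcal{E}_{\mathcal{B}_R}(E_v)$ for all $R>0$ and $\Phi\in C^\infty_0(\mathcal{B}_R)$. First I would split into two cases: differentiating the equation, $\dot v$ solves $(-\Delta)^s\dot v=f'(v)\,\dot v$ on~$\R$, a linear equation with bounded potential $f'(v)$ (recall that $v$ is bounded and $f\in C^{1,\alpha}_{\rm loc}$), and since $\dot v\ge0$ the strong maximum principle for $(-\Delta)^s$ gives that either $\dot v\equiv0$, in which case $v$ is a constant~$c$ with $f(c)=0$, or $\dot v>0$ on all of~$\R$. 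In the constant case $E_v\equiv c$, the gradient term of $\mathcal{E}$ vanishes on it, and testing the stability inequality with the rescalings $\zeta_\lambda(x,z):=\zeta(x/\lambda,z/\lambda)$, dividing by $\lambda^{n}$ and letting $\lambda\to+\infty$, one obtains $F''(c)=-f'(c)\ge0$; combined with the bistable structure of~$f$ this forces $c\in\{-1,1\}$, a point where $F$ attains its absolute minimum, so that $\mathcal{E}_{\mathcal{B}_R}(c+\Phi)-\mathcal{E}_{\mathcal{B}_R}(c)=\int_{\mathcal{B}_R^+}z^a|\nabla\Phi(x,z)|^2\,dx\,dz+\int_{B_R}\big(F(c+\Phi(x,0))-F(c)\big)\,dx\ge0$ and we are done by Proposition~\ref{EQUI DL}.

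It remains to treat the case $\dot v>0$, in which stability is not even needed. Then $\partial_x E_v=E_{\dot v}>0$ in $\R^{n+1}_+$, because the Poisson kernel is strictly positive; hence the translated extensions $(x,z)\mapsto E_v(x+t,z)$, $t\in\R$, are solutions of the extended problem whose graphs are pairwise disjoint and increasingly ordered in~$t$. Being bounded and increasing, $v$ has limits $v(\pm\infty)$ that are zeros of~$f$, and the one-dimensional layer-solution theory of~\cite{cabre-sire-AIHP} (conservation of the associated Hamiltonian) forces $v(\pm\infty)=\pm1$; therefore, together with the two constant leaves $\pm1$, the above translates provide an ordered foliation of $\overline{\R^{n+1}_+}\times[-1,1]$ by solutions. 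Given a competitor $E_v+\Phi$ with $\Phi\in C^\infty_0(\mathcal{B}_R)$, I would first replace it by $W:=\min\{\max\{E_v+\Phi,-1\},1\}$: truncating against the constant solutions $\pm1$ does not increase $\mathcal{E}_{\mathcal{B}_R}$ (here $F'=-f$ makes $F$ nondecreasing on $[1,+\infty)$ and nonincreasing on $(-\infty,-1]$) and does not change the competitor near $L_R\cup U_R$, so that $W$ is squeezed into the foliation and still coincides with $E_v$ near the lateral and top parts $L_R\cup U_R$ of $\partial\mathcal{B}_R^+$. One then runs the calibration/sliding argument of Alberti--Ambrosio--Cabr\'e, in the version adapted to the degenerate-weight functional $\mathcal{E}$ (as in~\cite{cabre-sire-AIHP} and~\cite{DSVarxiv}): the null-Lagrangian attached to the foliation bounds $\mathcal{E}_{\mathcal{B}_R}(W)$ from below by $\mathcal{E}_{\mathcal{B}_R}(E_v)$, the boundary contributions produced by the divergence theorem on $\partial\mathcal{B}_R^+$ cancelling because $W=E_v$ near $L_R\cup U_R$. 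Hence $\mathcal{E}_{\mathcal{B}_R}(E_v+\Phi)\ge\mathcal{E}_{\mathcal{B}_R}(W)\ge\mathcal{E}_{\mathcal{B}_R}(E_v)$, and $v$ is a local minimizer by Proposition~\ref{EQUI DL}.

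The delicate point is to make the calibration rigorous in the present non-decaying regime: as in Proposition~\ref{0OAPQO182:P}, the individual energies diverge, so the entire comparison must be carried out with the renormalized differences $\mathcal{E}_{\mathcal{B}_R}(\cdot)-\mathcal{E}_{\mathcal{B}_R}(E_v)$, controlling the (asymptotically vanishing) boundary terms on $L_R\cup U_R$ exactly as in the proof of Proposition~\ref{0OAPQO182:P}; one also needs the standard but slightly technical fact that truncating a competitor against ordered solutions of the extension problem does not raise $\mathcal{E}_{\mathcal{B}_R}$.
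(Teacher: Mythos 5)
Your argument follows essentially the same route as the paper: identify the limits at $\pm\infty$ as zeros of $f$, use the rescaled stability inequality to rule out the constant(s) with $f'>0$, invoke the Hamiltonian identity of~\cite{cabre-sire-AIHP} to exclude the layers connecting $-1$ to $0$ and $0$ to $1$, and finally deduce minimality of the monotone $-1$--to--$1$ layer from the ordered foliation by its translates (the paper phrases this last step as the sliding method of Lemma~9.1 in~\cite{VSS}, you phrase it as an Alberti--Ambrosio--Cabr\'e calibration; these are the two standard incarnations of the same idea). One caveat: you route both cases through Proposition~\ref{EQUI DL}, but that proposition (via Proposition~\ref{0OAPQO182:P} and Corollary~\ref{CPOR:EQ}) is stated and proved only for $n\ge2$ --- the convergence in~\eqref{STIME:EPHI1} uses $2n+2s-1>n+1$, which fails for $n=1$ and $s<\frac12$ --- so the passage from extended minimality of $E_v$ back to minimality of $v$ in the sense of Definition~\ref{D:L} is not licensed as written. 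This is repairable rather than fatal: in the constant case Definition~\ref{D:L} is verified directly since $F(\pm1)=\min F$, and in the layer case one can run the sliding/calibration directly on $\mathcal{F}_{B_R}$ (which is finite on bounded Lipschitz data in any dimension), or exploit the decay of $v\mp1$ at $\pm\infty$ granted by the layer theory to justify the $n=1$ trace identity; but as stated the last line of each case leans on a result outside its proved range.
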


\begin{proof} The monotonicity of~$v$ implies that the following limits exist:
$$ \underline\ell:= \lim_{t\to-\infty}v(t)\le\lim_{t\to+\infty}v(t) =:\overline\ell.$$
We also consider the sequence of functions~$v_k(t):=v(t+k)$.
By the Theorem of Ascoli, up to a subsequence we know that~$v_k$ converges to~$\overline\ell$
in~$C^2_{\rm loc}(\R)$, and so, passing the equation to the limit, we conclude that~$f(\overline\ell)=0$.
Similarly, one sees that~$f(\underline\ell)=0$.
As a consequence,
\begin{equation}\label{889:p}
\underline\ell,\,\overline\ell\in\{-1,0,1\}.\end{equation}
Now, we claim that
\begin{equation}\label{NOT Z}
{\mbox{$v$ is not identically zero.}}
\end{equation}
The proof is by contradiction: if
$v$ is identically zero, we take~$\psi\in C^\infty_0(\R^2)$
and, for~$\eps>0$, we let~$\psi_\eps(x,z):=\psi(\eps x,\eps z)$. 
The stability inequality for~$\psi_\eps$ gives that
\begin{eqnarray*}
0 &\le& \int_{\R^2_+} z^a |\nabla\psi_\eps(x,z)|^2\,dx\,dz+
\int_{\R} F''(v(x))\,\psi_\eps^2(x,0)\,dx\\ &=&
\eps^2 \int_{\R^2_+} z^a |\nabla\psi(\eps x,\eps z)|^2\,dx\,dz+
\int_{\R} F''(v(x))\,\psi^2(\eps x,0)\,dx
\\ &=&
\eps^{-a} \int_{\R^2_+} Z^a |\nabla\psi(X,Z)|^2\,dX\,dZ-\eps^{-1}
\int_{\R} f'(0)\,\psi^2(X,0)\,dX
\\ &=& C_1\e^{2s-1}-C_2\e^{-1} f'(0),
\end{eqnarray*}
for some~$C_1$, $C_2>0$. {F}rom this, one obtains that
$$ f'(0)\le \lim_{\eps\searrow0} \frac{C_1\eps^{2s}}{C_2}=0.$$
This is a contradiction, since~$f'(0)>0$ and thus~\eqref{NOT Z} is proved.

To complete the proof of Lemma~\ref{CL:MO},
we now distinguish two cases, either~$v$ is constant or not.
If~$v$ is constant, then it is either identically~$-1$ or identically~$1$,
due to~\eqref{NOT Z}, and this implies the desired result.

So, we can now focus on the case in which $v$ is not constant.
Then, $\underline\ell<\overline\ell$. So, from~\eqref{889:p},
we have that~$v$ is a transition layer connecting:
\begin{enumerate}
\item either $-1$ to~$0$,
\item or $0$ to $1$,
\item or $-1$ to $1$.
\end{enumerate}
In view of Theorem 2.2(i) in~\cite{cabre-sire-AIHP}, the first two cases cannot occur
and therefore
\begin{equation}\label{78s:10234}
\lim_{t\to\pm\infty} v(t)=\pm1.
\end{equation}
Since the proof of this fact relies on the theory of layer solutions,
we provide the details of the argument that we used.
We argue for a contradiction and we suppose that
\begin{eqnarray}
&& \label{C:caso1}
{\mbox{ either }} \lim_{t\to-\infty} v(t)=-1 {\mbox{ and }} \lim_{t\to+\infty} v(t)=0\\&&\label{C:caso2}
{\mbox{ or }} \lim_{t\to-\infty} v(t)=0 {\mbox{ and }} \lim_{t\to+\infty} v(t)=1.
\end{eqnarray}
By maximum principle, we know that~$\dot v>0$.
Then, if we set either~$\tilde v(t):= 2v(t)+1$ (if the case in~\eqref{C:caso1} holds true)
or~$\tilde v(t):= 2v(t)-1$ (if~\eqref{C:caso2} holds true), we have that
the derivative of~$\tilde v$ is strictly positive and
\begin{equation}\label{78s:10234:BIS} \lim_{t\to\pm\infty}\tilde v(t)=\pm1.\end{equation}
In addition,
$$ (-\Delta)^s \tilde v(t)= 2(-\Delta)^s v(t)= 2f(v(t))=2 
f\left( \frac{\tilde v(t)\mp1}{2}\right)=
-G'(\tilde v(t)),$$
with
$$ G(r):= -2\int_{0}^r f\left( \frac{\tau\mp1}{2}\right)\,d\tau
=
-4\int_{\mp1/2}^{(r\mp1)/2} f(\sigma)\,d\sigma.$$
This and~\eqref{78s:10234:BIS} give that we are in the setting of
Theorem 2.2(i) in~\cite{cabre-sire-AIHP}. In particular, from formula~(2.8)
in~\cite{cabre-sire-AIHP} we know that
$$ 0=G(-1)-G(1)=
4\int_{\mp1/2}^{(1\mp1)/2} f(\sigma)\,d\sigma
-4\int_{\mp1/2}^{(-1\mp1)/2} f(\sigma)\,d\sigma=
4\int_{(-1\mp1)/2}^{(1\mp1)/2} f(\sigma)\,d\sigma
,$$
hence
$$ {\mbox{ either }}\quad
\int_{-1}^{0} f(\sigma)\,d\sigma=0 \quad{\mbox{ or }}\quad
\int_{0}^{1} f(\sigma)\,d\sigma=0.$$
This is in contradiction with~\eqref{BISTABLE}
and so it proves~\eqref{78s:10234}.

Hence, necessarily~$v$ is a transition layer connecting~$-1$ to~$1$
and so it is minimal due to the sliding method (see e.g. the proof of Lemma~9.1
in~\cite{VSS}).
\end{proof}

\section{Classification of the profiles at infinity}\label{CLASS:A}

In this section, we consider the two profiles 
of a given solution at infinity. Namely, if~$s\in(0,1)$ and~$u\in C^2(\R^n,[-1,1])$
is a solution of~$(-\Delta)^s u=f(u)$ in~$\R^n$, 
with~$\partial_{x_n}u>0$ in~$\R^n$,
we set
$$ \underline u(x'):=\lim_{x_n\to-\infty} u(x',x_n)\quad{\mbox{ and }}\quad
\overline u(x'):=\lim_{x_n\to+\infty} u(x',x_n).$$
In this setting, we have:

\begin{lemma} \label{02738}
Assume that~$n=3$. Then,
both~$\underline u$ and~$\overline u$ are $1$D and local minimizers.
\end{lemma}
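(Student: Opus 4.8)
The plan is to analyze the two profiles at infinity, $\underline u$ and $\overline u$, which are functions on $\R^{n-1} = \R^2$. First I would establish that these profiles are well-defined and are themselves solutions of the same equation in $\R^2$: by monotonicity in $x_n$ and boundedness, the limits exist pointwise; by interior elliptic estimates for $(-\Delta)^s$ (the solution is $C^2$ with bounded second derivatives, as recalled after \eqref{QUE}), the convergence $u(x', x_n + k) \to \overline u(x')$ holds in $C^2_{\rm loc}(\R^2)$ along a subsequence, so passing the nonlocal equation to the limit gives $(-\Delta)^s \overline u = f(\overline u)$ in $\R^2$, and similarly for $\underline u$. One should check that the fractional Laplacian, being an integral over the whole of $\R^n$, behaves well under this limit — this uses that the kernel $|y|^{-n-2s}$ is integrable away from the origin and the uniform bound $\|u\|_{L^\infty} \le 1$ to apply dominated convergence on the tails, combined with $C^2_{\rm loc}$ convergence near the origin.

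**Next**, the key point is that $\overline u$ (resp. $\underline u$) is a \emph{stable} solution in $\R^2$. This should follow because $u$ is stable in $\R^n$ — indeed, the monotone solution $u$ with $\partial_{x_n} u > 0$ is stable (the classical observation that $\partial_{x_n} u$ is a positive solution of the linearized equation provides a positive supersolution, hence stability, via the extension formulation of stability given in the Definition before Section~\ref{SEC:2}). Stability passes to the limit profiles: given a test function $\zeta \in C^\infty_0(\R^{n})$ depending only on $(x', z)$, one translates it in the $x_n$ direction, uses the stability inequality for $u$, and passes to the limit using the $C^2_{\rm loc}$ convergence together with $F'' = -f'$ being continuous and the profiles being bounded. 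So $\overline u, \underline u$ are stable solutions of $(-\Delta)^s \cdot = f(\cdot)$ in $\R^2$.

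**Then I invoke the two-dimensional De Giorgi-type result.** Stable bounded solutions of the fractional Allen--Cahn equation in $\R^2$ are $1$D — this is precisely the content of the $2$D symmetry results cited in the introduction (\cite{SV09, cabre-TAMS, MR3035063} for the full range $s \in (0,1)$), which in fact prove that \emph{stable} solutions in dimension $2$ are one-dimensional, without needing monotonicity. Applying this to $\overline u$ and $\underline u$ shows each is $1$D: $\overline u(x') = \overline u_o(\overline\omega \cdot x')$ and $\underline u(x') = \underline u_o(\underline\omega \cdot x')$ for some $\overline u_o, \underline u_o : \R \to [-1,1]$ and unit vectors $\overline\omega, \underline\omega \in S^1$. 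Moreover each one-dimensional profile $\overline u_o$ is monotone: it is a limit of functions obtained by slicing $u$, and stability plus the $1$D structure forces monotonicity (alternatively one argues directly that a stable $1$D solution is, after possibly reflecting, monotone, as in the reasoning of Lemma~\ref{CL:MO}). Hence, by Lemma~\ref{CL:MO} applied in $\R$ (stable plus monotone $1$D solution is a local minimizer in $\R$), the profile $\overline u_o$ is a local minimizer in one variable, and therefore $\overline u$ — being a $1$D function whose one-dimensional profile is a local minimizer — is a local minimizer in $\R^2$; the same holds for $\underline u$.

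**The main obstacle** I anticipate is the last implication: upgrading ``the $1$D profile $\overline u_o$ is a local minimizer in $\R$'' to ``$\overline u$ is a local minimizer in $\R^2$.'' This requires comparing the nonlocal energy $\F_{B_R}$ in two dimensions against its one-dimensional analogue under perturbations, which is nontrivial because the $2$D perturbation need not respect the $1$D structure — one must show that for a local minimizer profile, adding any compactly supported $2$D perturbation cannot decrease the energy, typically by a dimension-reduction / layering argument for the nonlocal energy or by noting that $E_{\overline u}$ is the $a$-harmonic extension of a $1$D function, hence itself essentially $1$D in the extended variables, and applying the extended-minimality framework of Section~\ref{SEC:1} together with known minimality of $1$D layer extensions. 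A careful handling of the renormalized energy from Proposition~\ref{0OAPQO182:P} and Corollary~\ref{CPOR:EQ} is what makes this step go through in the genuinely nonlocal regime where the bare energies diverge.
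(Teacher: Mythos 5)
Your proposal follows essentially the same route as the paper: pass the stability inequality to the limit profiles (via sliced test functions $\xi(x',z)\eta(\e x_n)$), invoke the two-dimensional classification of stable solutions (Theorem~2.12 of~\cite{cabre-TAMS}, or~\cite{SV09}) to get that $\underline u,\overline u$ are $1$D and monotone, and then apply Lemma~\ref{CL:MO}. The ``main obstacle'' you flag at the end is resolved more simply than you suggest: the minimality in Lemma~\ref{CL:MO} comes from the sliding/foliation argument (as in Lemma~9.1 of~\cite{VSS}), which calibrates the energy of the monotone layer connecting $-1$ to $1$ in \emph{any} ambient dimension, so one obtains local minimality of $\overline u$ directly in $\R^2$ without a separate dimension-reduction step.
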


\begin{proof} By passing the equation to the limit, we have that
\begin{equation}\label{sta000}
{\mbox{both~$\underline u$ and~$\overline u$ are 
stable solutions in~$\R^{n-1}=\R^2$.}}\end{equation}
The proof of~\eqref{sta000} is based on a general
argument (see e.g.~\cite{MR2483642}), given in details
here for the sake of completeness.
Let~$\xi\in C^\infty_0(\R^{n})$ and~$\eta\in C^\infty_0((-1,1))$, with~$\eta(0)=1$.
Given~$\e>0$, we set~$\xi_\e(x,z)=\xi_\e(x',x_n,z):=
\xi(x',z)\eta(\e x_n)$. Notice that~$\xi_\e\in C^\infty_0(\R^{n+1})$,
therefore by the stability of~$u$ and the translation invariance we
have that
\begin{eqnarray*}
0 &\le& \lim_{t\to+\infty}
C \int_{\R^{n+1}_+} z^a |\nabla \xi_\e(x,z)|^2\,dx\,dz+
\int_{\R^n} F''( u(x',x_n+t))\,\xi_\e(x,0)\,dx\\&=&
C \int_{\R^{n+1}_+} z^a |\nabla \xi_\e(x,z)|^2\,dx\,dz+
\int_{\R^n} F''( \overline u(x'))\,\xi_\e^2(x,0)\,dx\\
&=&
C \int_{\R^{n+1}_+} z^a \Big(
|\nabla \xi(x',z)|^2 \eta^2(\e x_n)+
\e^2 |\nabla \eta(\e x_n)|^2 \xi^2(x',z)+2\e
\eta(\e x_n)\xi(x',z)\nabla\eta(\e x_n)\cdot\nabla \xi(x',z)
\Big)\,dx\,dz\\&&\quad+
\int_{\R^n} F''( \overline u(x'))\,\xi^2(x',0)\eta^2(\e x_n)\,dx.
\end{eqnarray*}
Hence, taking the limit as~$\e\to0$,
$$ 0\le
C \int_{\R^{n+1}_+} z^a 
|\nabla \xi(x',z)|^2 \,dx\,dz+
\int_{\R^n} F''( \overline u(x'))\,\xi^2(x',0)\,dx,$$
and so~$\overline u$ is stable in~$\R^{n-1}$.
This proves~\eqref{sta000} for $\overline u$ (the case of~$\underline u$
is similar).

As a consequence of \eqref{sta000} and of the classification results in the plane
(see in particular Theorem 2.12 in~\cite{cabre-TAMS}, or~\cite{SV09}), we conclude that~$
\underline u$ and~$\overline u$ are~$1$D and monotone.
Then, the local minimality is a consequence of Lemma~\ref{CL:MO}.
\end{proof}

\section{Local minimization by range constraint}\label{CLASS:B}

In this section, we point out that perturbations which do not pointwise exceed
the limit profiles necessarily increase the energy. For the classical case,
this property has been exploited in Theorem~4.5 of~\cite{AAC},
Theorem~10.4 of~\cite{danielli} and 
Lemma~2.2 of~\cite{FV11}. In the framework of this paper, the result that we need is the following:

\begin{lemma}\label{INTRAP}
Let~$s\in(0,1)$ and~$u\in C^2(\R^n,[-1,1])$
be a solution of~$(-\Delta)^s u=f(u)$ in~$\R^n$, 
with~$\partial_{x_n}u>0$ in~$\R^n$.
Let
$$ \underline u(x'):=\lim_{x_n\to-\infty} u(x',x_n)\quad{\mbox{ and }}\quad
\overline u(x'):=\lim_{x_n\to+\infty} u(x',x_n).$$
Let also~$R>0$ and~$\Phi\in C^\infty_0({\mathcal{B}}_R)$ and suppose that
$$ (E_u+\Phi)(X)\in \big[ E_{\underline u}(X),\,E_{\overline u}(X)\big]\qquad{\mbox{for 
any }}X\in\R^{n+1}_+.$$
Then~${\mathcal{E}}_{ {\mathcal{B}}_R }(E_u)\le{\mathcal{E}}_{ {\mathcal{B}}_R }(E_u+\Phi)$.
\end{lemma}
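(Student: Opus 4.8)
The plan is to exploit the monotonicity $\partial_{x_n}u>0$ together with the sliding method, adapted to the extended setting. The key observation is that the competitor $E_u+\Phi$ is trapped between the two $a$-harmonic extensions $E_{\underline u}$ and $E_{\overline u}$, which themselves are the ``profiles at infinity'' of the family of vertical translates $E_{u(\cdot,\cdot+t)}=E_u(\cdot\,,\cdot+t,\cdot)$ of $E_u$. The monotonicity in $x_n$ transfers to the extension, so $t\mapsto E_u(x',x_n+t,z)$ is increasing, with $E_{\underline u}(x',z)$ and $E_{\overline u}(x',z)$ as its limits as $t\to-\infty$ and $t\to+\infty$. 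Therefore, for $t$ large and negative, $E_u(x',x_n+t,z)$ lies strictly below $(E_u+\Phi)(X)$ wherever $\Phi$ is supported (since there $E_u+\Phi\geq E_{\underline u}$, strictly above $E_u(\cdot+t)$ for $t\ll 0$ by strict monotonicity and the fact that $\Phi$ has compact support), and symmetrically for $t$ large and positive $E_u(\cdot+t)$ lies above $E_u+\Phi$.

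First I would record that the vertical translates $U_t(x',x_n,z):=E_u(x',x_n+t,z)$ are all extended local minimizers: indeed $u(\cdot,\cdot+t)$ is a solution with the same monotonicity, hence by the classification of the profiles and the preceding analysis it is a local minimizer (this uses Lemma~\ref{02738} and Lemma~\ref{CL:MO} only for the profiles, but for $U_t$ itself one argues directly that translation preserves the local minimality property once it is known — alternatively, one simply uses that $u$ and all its translates solve the same equation and the minimality will be bootstrapped in later sections; for the present lemma I only need the minimality of $E_{\underline u}$ and $E_{\overline u}$, which is exactly Lemma~\ref{02738}). Then I would set up the sliding: define, for the fixed competitor $W:=E_u+\Phi$,
\[
\bar t:=\inf\{t\in\R:\ U_t(X)\geq W(X)\ \text{for all}\ X\in\R^{n+1}_+\}.
\]
By the trapping hypothesis $W\leq E_{\overline u}=\lim_{t\to+\infty}U_t$ and the uniform continuity/compactness coming from $\Phi\in C^\infty_0(\mathcal B_R)$, this infimum is finite and attained, so $U_{\bar t}\geq W$ everywhere with contact somewhere.

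The heart of the argument is then a comparison of energies at the contact parameter. On $\mathcal B_R$, both $U_{\bar t}$ and $W$ agree with $E_u$ outside $\mathrm{supp}\,\Phi$, and $U_{\bar t}\geq W$ with equality at an interior contact point (or a boundary-trace contact point on $\{z=0\}$). If the contact is at an interior point of $\mathcal B_R^+$, the strong maximum principle for the degenerate operator $\mathrm{div}(z^a\nabla\cdot)$ forces $U_{\bar t}\equiv W$ on the connected component, which then propagates (by unique continuation / connectedness through $\{z=0\}$ using the Neumann condition encoding $(-\Delta)^s$) to give $W=U_{\bar t}$ identically, whence $\mathcal E_{\mathcal B_R}(W)=\mathcal E_{\mathcal B_R}(U_{\bar t})=\mathcal E_{\mathcal B_R}(E_u)$ by translation invariance of the energy, giving even equality in the claim. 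If instead the contact is only at $z=0$, i.e. on the trace, the Hopf lemma at $\{z=0\}$ combined with the equation $(-\Delta)^s$ rules out a nontrivial touching unless $\bar t$ is such that $U_{\bar t}$ is itself admissible as a competitor; in that case $U_{\bar t}=E_u+\tilde\Phi$ for some $\tilde\Phi\in C^\infty_0(\mathcal B_R)$ — here one needs that $\bar t=0$, i.e. $W$ lies below $E_u$ somewhere forces $W=E_u$ on the contact set — and one concludes by local minimality of $E_u$, which is what we are proving, so instead I would argue as follows: by minimality of $E_{\overline u}$ (Lemma~\ref{02738}) applied on $\mathcal B_R$, comparing $E_{\overline u}$ with $\min\{E_{\overline u}, W + (E_{\overline u}-U_{\bar t})\}$-type truncations, and symmetrically with $E_{\underline u}$, one shows that the ``excess energy'' $\mathcal E_{\mathcal B_R}(W)-\mathcal E_{\mathcal B_R}(E_u)$ cannot be negative, because any negative excess could be slid up/down to produce a negative excess for $E_{\overline u}$ or $E_{\underline u}$ against their own translates, contradicting their minimality.

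\textbf{Main obstacle.} The delicate point is the last one: turning the geometric sliding into an actual energy inequality for the \emph{nonlocal} (extended) energy, because the extended energy $\mathcal E_{\mathcal B_R}$ is nonlocal in the sense that modifying $E_u$ inside $\mathcal B_R$ changes the gradient term also through the coupling with the exterior; one cannot naively ``cut and paste'' $U_{\bar t}$ and $W$. The clean way is to use the truncation/rearrangement identity: for the $a$-harmonic-extension energy, $\mathcal E_\Omega(\max\{A,B\})+\mathcal E_\Omega(\min\{A,B\})=\mathcal E_\Omega(A)+\mathcal E_\Omega(B)$ whenever $A,B$ share the same boundary data on $\partial\mathcal B_R^+\setminus\{z=0\}$, which holds here since all competitors equal $E_u$ near $\partial\mathcal B_R$. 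Applying this with $A=W$ and $B=U_{\bar t}$ and using $\mathcal E_{\mathcal B_R}(\max\{W,U_{\bar t}\})=\mathcal E_{\mathcal B_R}(U_{\bar t})=\mathcal E_{\mathcal B_R}(E_u)$ (translation invariance, and $\max=U_{\bar t}$ by choice of $\bar t$) and $\mathcal E_{\mathcal B_R}(\min\{W,U_{\bar t}\})=\mathcal E_{\mathcal B_R}(W)$ (since $\min=W$) yields $\mathcal E_{\mathcal B_R}(W)+\mathcal E_{\mathcal B_R}(E_u)\geq \mathcal E_{\mathcal B_R}(E_u)+\mathcal E_{\mathcal B_R}(W)$ — a tautology — so instead one applies the cut-and-paste with $U_{\bar t}$ replaced by the \emph{minimizer} $E_{\overline u}$ at the level $+\infty$: since $W\leq E_{\overline u}$, set $A=W$, $B=E_{\overline u}$; then $\mathcal E(\min\{W,E_{\overline u}\})=\mathcal E(W)$ and $\mathcal E(\max\{W,E_{\overline u}\})=\mathcal E(E_{\overline u})$, and the identity gives nothing new unless $E_{\overline u}$ itself is perturbed; so the actual mechanism must combine \emph{both} constraints $E_{\underline u}\leq W\leq E_{\overline u}$ and the minimality of \emph{both} profiles, sliding the lower profile up to meet $W$ from below, which is precisely where strict monotonicity of $E_u$ in $x_n$ does the work. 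I expect the proof to proceed by: (i) sliding $E_u(\cdot+t)$ from $t=-\infty$ (where it lies below $W$ on $\mathrm{supp}\,\Phi$) upward until first contact at some $t_-$, concluding $t_-\le 0$; (ii) symmetrically sliding from $+\infty$ down to first contact at $t_+\ge 0$; (iii) at $t=0$, deducing from these that $W$ and $E_u$ can be compared via the minimality of the profiles through the maximum principle, and the absence of interior contact unless $W\equiv E_u$. The genuinely hard step is (iii): making rigorous that a negative energy excess for $W$ contradicts the minimality of $E_{\underline u}$ or $E_{\overline u}$, which requires the cut-and-paste energy identity for $z^a|\nabla\cdot|^2$ together with careful boundary-term control near $\{z=0\}$ and near $\partial\mathcal B_R$, exactly of the type already carried out in the proof of Proposition~\ref{0OAPQO182:P}.
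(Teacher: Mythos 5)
Your proposal contains a genuine gap, and you essentially flag it yourself: you never close step (iii), and the cut-and-paste identities you try all collapse into tautologies. The reason they collapse is that you are sliding the translates $U_t$ against the \emph{raw} competitor $W=E_u+\Phi$. That object satisfies no equation, so at the first-contact parameter $\bar t$ the strong maximum principle and the Hopf lemma have nothing to bite on: comparison principles compare two functions at least one of which is a sub/supersolution at the touching point, and $W$ is neither. The missing idea in the paper's argument is to first pass from the extended energy to the trace functional $\mathcal F_{B_R}$ (via Proposition~\ref{0OAPQO182:P}, so the hypothesis $\mathcal E_{\mathcal B_R}(E_u+\Phi)<\mathcal E_{\mathcal B_R}(E_u)$ becomes $\mathcal F_{B_R}(u+\varphi)<\mathcal F_{B_R}(u)$ with $\varphi=\Phi(\cdot,0)$), and then to replace the competitor by a \emph{minimizer} $w_o=u+\varphi_o$ of $\mathcal F_{B_R}$ in the class of perturbations constrained to lie in $[\underline u,\overline u]$. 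This $w_o$ satisfies $(-\Delta)^s w_o=f(w_o)$ where the constraint is inactive and one-sided differential inequalities where it is active; a maximum-principle argument then shows the constraints are in fact nowhere active, so $w_o$ is a genuine solution strictly between the profiles. Only now does the sliding work: one slides the translates $u(\cdot+ke_n)$ (solutions) against $w_o$ (a solution), and at first contact the strong maximum principle for $(-\Delta)^s$ forces coincidence, yielding $w_o\equiv u$ and hence $\mathcal F_{B_R}(w_o)=\mathcal F_{B_R}(u)$, contradicting the strict energy decrease. All of this happens in $\R^n$ for the fractional Laplacian; no touching analysis in the extended half-space, and no unique continuation through $\{z=0\}$, is needed.

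A second, smaller misconception: the minimality of $E_{\underline u}$ and $E_{\overline u}$ (Lemma~\ref{02738}) plays no role in this lemma. What does the work is the range constraint itself, combined with the facts that $\Phi$ has compact support and that the translates of $u$ sweep out the whole interval $(\underline u,\overline u)$ monotonically. The minimality of the profiles enters only later, in Lemma~\ref{0tyg27427412}, where the present lemma is combined with truncation by $E_{\underline u_\star}$ and $E_{\overline u_\star}$ to handle \emph{unconstrained} perturbations. Your instinct that the sliding method is the engine, and that the compact support of $\Phi$ plus the trapping hypothesis are what start and stop the slide, is correct; but without the intermediate constrained-minimization step the argument cannot be completed along the lines you sketch.
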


\begin{proof} The argument is by contradiction. 
We suppose that there exist~$R>0$
and a perturbation~$\Phi\in C^\infty_0({\mathcal{B}}_R)$ with
$$ (E_u+\Phi)(X)\in \big[ E_{\underline u}(X),\,E_{\overline u}(X)\big]\qquad{\mbox{for 
any }}X\in\R^{n+1}_+$$
and such that
$$ {\mathcal{E}}_{ {\mathcal{B}}_R }(E_u+\Phi) < {\mathcal{E}}_{ {\mathcal{B}}_R }(E_u).$$
That is, letting~$\varphi(x):=\Phi(x,0)$, 
by Proposition~\ref{0OAPQO182:P},
\begin{eqnarray*}
0 &>& 
\frac{1}{2}\int_{\R^{n+1}_+}z^a \big(|\nabla (E_u+\Phi)(x,z)|^2-
|\nabla E_u(x,z)|^2\big)\,dx\,dz
+\int_{B_R} \big( F((u+\varphi)(x))-F(u(x))\big)\,dx
\\ &=&
\frac{1}{2}\iint_{\R^{2n}}\frac{|(u+\varphi)(x)-(u+\varphi)(y)|^2
-|u(x)-u(y)|^2}{|x-y|^{n+2s}}\,dx\,dy
+\int_{B_R} \big( F((u+\varphi)(x))-F(u(x))\big)\,dx\\&=&
{\mathcal{F}}_{B_R}(u+\varphi)-{\mathcal{F}}_{B_R}(u).
\end{eqnarray*}
Therefore, there exists a perturbation~$w_o:=u+\varphi_o$ of~$u$, with~$\varphi_o\in C^\infty_0(B_R)$,
such that
\begin{equation}\label{TRA:1}
w_o(x)\in \big[ {\underline u}(x'),\,{\overline u}(x')\big]\qquad{\mbox{for 
any }}x\in\R^n\end{equation}
and
\begin{equation}\label{GHL:ocon} {\mathcal{F}}_{B_R}(w_o)=\inf_{{\varphi\in C^\infty_0(B_R)}\atop{
(u+\varphi)(x)
\in [ {\underline u}(x'),\,{\overline u}(x')]}}
{\mathcal{F}}_{B_R}(u+\varphi)<{\mathcal{F}}_{B_R}(u).\end{equation}
As a consequence, by taking energy perturbations,
we see that~$(-\Delta)^s w_o=f'(w_o)$ inside~$\{
x\in\R^n {\mbox{ s.t. }} w_o(x)\in [ {\underline u}(x'),\,{\overline u}(x')]\}$.
In addition, if~$w(x_o)={\overline u}(x_o)$, then~$(-\Delta)^s w_o(x_o)\le f'(w_o(x_o))$.
Similarly,
if~$w(x_o)={\underline u}(x_o)$, then~$(-\Delta)^s w_o(x_o)\ge f'(w_o(x_o))$.

Now we claim that strict inequalities hold in~\eqref{TRA:1}, namely
\begin{equation}\label{TRA:2}
w_o(x)\in \big( {\underline u}(x'),\,{\overline u}(x')\big)\qquad{\mbox{for 
any }}x\in\R^n.\end{equation}
To check this, suppose by contradiction, for instance,
that there exists~$x_o\in\R^n$
such that~$w_o(x_o)={\overline u}(x_o')$. Then, the function~$\zeta(x):=
{\overline u}(x')-w_o(x)$ has a minimum at~$x_o$. Accordingly,
$$ 0\ge(-\Delta)^s \zeta(x_o) \ge f({\overline u}(x_o'))- f'(w_o(x_o)) =0.$$
This gives that~$\zeta$ must vanish identically, and thus that~$w_o$ is identically equal to~$
{\overline u}$. Then, taking~$\bar x\in\R^n\setminus B_R$, we have that
$$ {\overline u}(\bar x')= w_o(\bar x)=u(\bar x)<
{\overline u}(\bar x').$$
This is a contradiction, and therefore~\eqref{TRA:2} is established.

{F}rom~\eqref{TRA:2}, it follows that
$$ \max_{\overline{B_R}}( w_o-{\overline u})<0.$$
Now, we let~$w_k(x):=u(x+ke_n)$. We claim that there exists~$\bar k\in\N$ such that
\begin{equation} \label{df:29}
w_{\bar k}(x)>w_o(x) {\mbox{ for any }} x\in\R^n.\end{equation}
To prove this, we argue by contradiction and suppose that for any~$k\in\N$
there exists~$x_k\in\R^n$ such that~$
w_{k}(x_k)\le w_o(x_k)$.
Then, $x_k\in B_R$
(otherwise~$w_o(x_k)=u(x_k)<u(x_k+ke_n)=w_k(x_k)$).
Accordingly, up to a subsequence, we may suppose that~$x_k\to\bar x$,
for some~$\bar x\in\overline{B_R}$ as~$k\to+\infty$.
Consequently,
$$ 0\le\lim_{k\to+\infty} w_o(x_k)-w_k(x_k)
= w_o(\bar x)-
\lim_{k\to+\infty} u(x_k-ke_n)=
w_o(\bar x)-
\lim_{k\to+\infty} u(\bar x-ke_n)=
w_o(\bar x)-\overline u(\bar x').
$$
But this inequality is in contradiction with~\eqref{TRA:2}
and thus we have proved~\eqref{df:29}.

Now, starting from~\eqref{df:29}, we can reduce~$k$ till
a touching between~$w_k$ and~$w_o$ occurs.
That is, we define~$k_\star:=\inf\{ k\in [0,\bar k]
{\mbox{ s.t. }}w_k>w_o
\}$. We claim that
\begin{equation}\label{k0}
k_\star=0.
\end{equation}
Once again, suppose not. Then, the function~$v:=w^{k_\star}-w_o$
would satisfy~$v\ge0$ in~$\R^n$, with~$v(\tilde x)=0$, for some~$\tilde x\in\overline{B_R}$.
As a consequence,
$$ 0\ge (-\Delta)^s v(\tilde x) = f(w^{k_\star}(\tilde x))-f(w(\tilde x))=0,$$
which implies that~$v$ vanishes identically. In particular, fixing~$x_\star$
outside~$B_R$, we would have that
$$ 0=v(x_\star)=
w^{k_\star}(x_\star)-w_o(x_\star)=
u(x_\star+k_\star e_n)-u(x_\star)>0.$$
This contradiction completes the proof of~\eqref{k0}.

Now, in view of~\eqref{k0}, we obtain that, for any~$x\in\R^n$,
$$ w_o(x)\le\lim_{k\searrow0} w_k(x)=
\lim_{k\searrow0} u(x+ke_n)=u(x).$$
Similarly, one can prove that~$w_o(x)\ge u(x)$.
Therefore, $w_o$ and~$u$ must coincide
and so~${\mathcal{F}}_{B_R}(w_o)={\mathcal{F}}_{B_R}(u)$.
But this fact is in contradiction with~\eqref{GHL:ocon}
and so we have completed the proof of Lemma~\ref{INTRAP}.
\end{proof}

\section{Local minimization properties inherited from those of the profiles at infinity}\label{CLASS:C}

In this section, we show that if the profiles at infinity are local minimizers,
then so is the original solution. In the classical case of the Laplacian, this property
was discussed, for instance, in Proposition~2.3 of~\cite{FV11}.
In our setting, the result that
we need is the following (and it uses
the pivotal definition of extended local minimizer in Definition~\ref{D:L:2}):

\begin{lemma}\label{0tyg27427412}
Let~$s\in(0,1)$ and~$u\in C^2(\R^n,[-1,1])$
be a solution of~$(-\Delta)^s u=f(u)$ in~$\R^n$, 
with~$\partial_{x_n}u>0$ in~$\R^n$.
Let
$$ \underline u(x'):=\lim_{x_n\to-\infty} u(x',x_n)\quad{\mbox{ and }}\quad
\overline u(x'):=\lim_{x_n\to+\infty} u(x',x_n)$$
and suppose that~$\underline u$ and~$\overline u$ are local minimizers (in~$\R^{n-1}$).
Then, $E_u$ is an extended local minimizer (in~$\R^{n+1}_+$)
and~$u$ is a local minimizer (in~$\R^{n}$).
\end{lemma}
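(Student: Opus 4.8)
The plan is to compare the energy of $E_u$ in a large ball $\mathcal{B}_R$ with that of a competitor $E_u+\Phi$, $\Phi\in C^\infty_0(\mathcal{B}_R)$, and to reduce to the range‑constrained minimization already handled in Lemma~\ref{INTRAP}. First I would observe that, by the monotonicity $\partial_{x_n}u>0$ and the definition of $\underline u,\overline u$ as the pointwise limits, one has $\underline u(x')\le u(x)\le\overline u(x')$ for all $x\in\R^n$, and hence, applying the Poisson extension (which preserves pointwise order since $P\ge0$), $E_{\underline u}(X)\le E_u(X)\le E_{\overline u}(X)$ for all $X\in\R^{n+1}_+$. So $E_u$ lies ``between'' the extensions of its profiles. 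The idea is then to take an arbitrary competitor $E_u+\Phi$ and \emph{truncate} it against these two barriers: set
\[
W:=\min\big\{\max\{E_u+\Phi,\;E_{\underline u}\},\;E_{\overline u}\big\}.
\]
Then $W$ agrees with $E_u+\Phi$ outside the region where $\Phi$ pushes the function past a barrier, $W-E_u$ is still compactly supported in $\mathcal{B}_R$ (since $E_u$ already satisfies the two inequalities and $\Phi$ has compact support), and $W\in[E_{\underline u},E_{\overline u}]$ pointwise.

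The key step is the energy comparison ${\mathcal E}_{\mathcal{B}_R}(W)\le{\mathcal E}_{\mathcal{B}_R}(E_u+\Phi)$. This is where the local minimality of the profiles enters: the excess of $E_u+\Phi$ over $W$ occurs exactly on the sets $\{E_u+\Phi>E_{\overline u}\}$ and $\{E_u+\Phi<E_{\underline u}\}$, and on each such set $W$ coincides with (a truncation of) one of the barrier extensions $E_{\overline u}$, $E_{\underline u}$. Since $\overline u$ and $\underline u$ are local minimizers in $\R^{n-1}$, by Proposition~\ref{EQUI DL} their extensions $E_{\overline u}$, $E_{\underline u}$ are extended local minimizers in $\R^n_+$ (for test functions depending also on the extra horizontal variable $x_n$ — this is a mild extra point one must check, using that the barrier is $x_n$‑independent so slicing in $x_n$ and applying one‑dimensional‑in‑$x_n$ Fubini plus the $(n-1)$‑dimensional minimality works, just as in the stability argument of Lemma~\ref{02738}). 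Hence replacing $E_u+\Phi$ by the lower barrier where it exceeds it below, and by the upper barrier where it exceeds it above, does not increase the Dirichlet energy; the potential term is handled because $F$ is being evaluated on values pulled back toward $[{\underline u},{\overline u}]$, and one checks the relevant one‑sided inequalities for $F$ exactly as in the barrier comparison for minimizers. I would write this out as: ${\mathcal E}_{\mathcal{B}_R}(E_u+\Phi)-{\mathcal E}_{\mathcal{B}_R}(W)\ge 0$, splitting the integral over the ``above'' and ``below'' regions and invoking the (extended) minimality of $E_{\overline u}$ and $E_{\underline u}$ on each piece.

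Once this is in hand, the proof finishes quickly. Given any $\Phi\in C^\infty_0(\mathcal{B}_R)$, write $W=E_u+\widetilde\Phi$ with $\widetilde\Phi\in C^\infty_0(\mathcal{B}_R)$ (after a harmless smoothing/approximation of the truncation, or by arguing at the level of the trace $v$ and its compactly supported perturbation and then passing to the harmonic extension via Proposition~\ref{0OAPQO182:P}); then
\[
{\mathcal E}_{\mathcal{B}_R}(E_u+\Phi)\;\ge\;{\mathcal E}_{\mathcal{B}_R}(W)\;=\;{\mathcal E}_{\mathcal{B}_R}(E_u+\widetilde\Phi)\;\ge\;{\mathcal E}_{\mathcal{B}_R}(E_u),
\]
where the last inequality is precisely Lemma~\ref{INTRAP} applied to the range‑constrained perturbation $\widetilde\Phi$. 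Since $R>0$ and $\Phi$ were arbitrary, $E_u$ is an extended local minimizer, and then $u$ is a local minimizer in $\R^n$ by Proposition~\ref{EQUI DL} (equivalently by Corollary~\ref{CPOR:EQ}).

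The main obstacle I anticipate is the truncation step: making rigorous that cutting $E_u+\Phi$ against the \emph{extensions} $E_{\underline u}, E_{\overline u}$ (a) keeps the perturbation admissible, i.e. still a $C^\infty_0(\mathcal{B}_R)$ perturbation of $E_u$ — which requires either an approximation argument smoothing the $\min/\max$ or working with the traces $v=u+\varphi$ on $\{z=0\}$ and then re‑extending and using Proposition~\ref{0OAPQO182:P} to transfer the energy inequality — and (b) genuinely does not raise the energy, which needs the minimality of the profiles \emph{in the extended sense}, and hence the small lemma that an $x_n$‑independent extended local minimizer in $\R^{n-1}$, viewed in $\R^n_+$, is still an extended local minimizer. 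Both are routine in the classical Allen–Cahn setting (cf. the cited Proposition~2.3 of~\cite{FV11}), but here one must be a little careful because the energies are only finite after the renormalization of Proposition~\ref{0OAPQO182:P}; working throughout with \emph{differences} of energies (as the whole section does) keeps every quantity finite and makes the truncation comparison legitimate.
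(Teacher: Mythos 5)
Your proposal is correct and follows essentially the same route as the paper: the paper likewise first shows (by slicing in $x_n$ and using Proposition~\ref{EQUI DL}) that the $x_n$-independent extensions $E_{\overline u_\star}$, $E_{\underline u_\star}$ are extended local minimizers in $\R^{n+1}_+$, then truncates the competitor $E_u+\Psi$ against these two barriers (your $W$ is exactly the paper's function $\beta$), applies the barrier minimality on the overshoot regions $\{E_u+\Psi>E_{\overline u_\star}\}$ and $\{E_u+\Psi<E_{\underline u_\star}\}$, and concludes with Lemma~\ref{INTRAP} on the range-constrained truncation. The technical points you flag (admissibility of the truncated competitor, working with renormalized energy differences) are handled implicitly in the paper in the same spirit, so there is no substantive divergence.
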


\begin{proof} Our goal is to show that $E_u$ is an extended local minimizer 
in the sense of Definition~\ref{D:L:2} (from this, we also obtain that~$u$ is a local minimizer,
thanks to Proposition~\ref{EQUI DL}). To this aim, fixed~$x_n\in\R$,
we use the following ``slicing notation'' for a domain~$\Omega\subset\R^{n+1}$:
we let
$$ \Omega^{x_n} := \{ (x',z)\in\R^{n-1}\times\R {\mbox{ s.t. }}
(x',x_n,z)\in\Omega\}.$$
Also, the function~$\overline u:\R^{n-1}\to\R$ can be seen as a function on~$\R^n$,
by defining~$\overline u_\star(x',x_n):=\overline u(x')$ and so we can consider its $a$-harmonic
extension~$E_{\overline u_\star}$.
Given~$R>0$ and~$\Phi\in C^\infty_0({\mathcal{B}}_R)$, with~$\varphi(x):=\Phi(x,0)$,
we also define~$\Phi_{x_n}(x',z):=\Phi(x',x_n,z)$ and~$\varphi_{x_n}(x'):=\varphi(x',x_n)$.
Hence
we have that
\begin{equation}\label{8920tr}
\begin{split}
{\mathcal{E}}_{ {\mathcal{B}}_R }(E_{\overline u_\star}+\Phi)\;
&= 
\frac{1}{2}
\int_{{\mathcal{B}}_R^+}z^a |\nabla (E_{\overline u_\star}+\Phi)(x,z)|^2\,dx\,dz+
\int_{B_R} F(\overline u(x')+\varphi(x))\,dx
\\ &\ge \int_\R\left[
\frac{1}{2}
\int_{(B_R)_{x_n}\times(0,R)}z^a |\nabla_{(x',z)} E_{\overline u}(x',z)+\nabla_{(x',z)}
\Phi_{x_n} (x',z)|^2\,dx'\,dz\right.\\ &\qquad+\left.
\int_{(B_R)_{x_n}} F(\overline u(x')+\varphi_{x_n}(x))\,dx'
\right]\,dx_n
\end{split}\end{equation}
Also, since~$\overline u$ is a local minimizer in~$\R^{n-1}$, we have
that~$E_{\overline u}$ is an extended local minimizer in~$\R^{n-1}\times(0,+\infty)$,
thanks to
Proposition~\ref{EQUI DL}, and therefore
\begin{eqnarray*}&&
\frac{1}{2}
\int_{(B_R)_{x_n}\times(0,R)}z^a |\nabla_{(x',z)} E_{\overline u}(x',z)+\nabla_{(x',z)}
\Phi_{x_n} (x',z)|^2\,dx'\,dz+
\int_{(B_R)_{x_n}} F(\overline u(x')+\varphi_{x_n}(x))\,dx'
\\ &\ge&
\frac{1}{2}
\int_{(B_R)_{x_n}\times(0,R)}z^a |\nabla_{(x',z)} E_{\overline u}(x',z)|^2\,dx'\,dz
+\int_{(B_R)_{x_n}} F(\overline u(x'))\,dx'.
\end{eqnarray*}
By inserting this inequality into~\eqref{8920tr}, we obtain that~$
{\mathcal{E}}_{ {\mathcal{B}}_R }(E_{\overline u_\star}+\Phi)\ge
{\mathcal{E}}_{ {\mathcal{B}}_R }(E_{\overline u_\star})$.
That is, 
\begin{equation}\label{OVER}
{\mbox{$E_{\overline u_\star}$ is an extended local minimizer in~$\R^{n+1}_+$.
}}\end{equation}
Similarly,
one can define~$\underline u_\star(x',x_n):=\underline u(x')$
and conclude that
\begin{equation}\label{UNDER}
{\mbox{$E_{\underline u_\star}$ is an extended local minimizer in~$\R^{n+1}_+$.
}}\end{equation}
Now,
given~$R>0$ and~$\Psi\in C^\infty_0({\mathcal{B}}_R)$, with~$\psi(x):=\Psi(x,0)$,
we consider the perturbation~$E_u+\Psi$.
We define
\begin{eqnarray*}
&&\alpha(X):=\left\{
\begin{matrix}
E_{\overline u_\star}(X) & {\mbox{ if }} (E_u+\Psi)(X)\le E_{\overline u_\star}(X),\\
(E_u+\Psi)(X) & {\mbox{ if }} (E_u+\Psi)(X)> E_{\overline u_\star}(X),
\end{matrix}
\right. \\
&&\beta(X):=\left\{
\begin{matrix}
E_{\overline u_\star}(X) & {\mbox{ if }} (E_u+\Psi)(X)> E_{\overline u_\star}(X),\\
E_{\underline u_\star}(X) & {\mbox{ if }} (E_u+\Psi)(X)< E_{\underline u_\star}(X),\\
(E_u+\Psi)(X) & {\mbox{ if }} (E_u+\Psi)(X)\in\big[ E_{\underline u_\star}(X),
\,E_{\overline u_\star}(X)\big],
\end{matrix}
\right. \\
{\mbox{and }}&&\gamma(X):=\left\{
\begin{matrix}
E_{\underline u_\star}(X) & {\mbox{ if }} (E_u+\Psi)(X)\ge E_{\underline u_\star}(X),\\
(E_u+\Psi)(X) & {\mbox{ if }} (E_u+\Psi)(X)< E_{\underline u_\star}(X).
\end{matrix}
\right. 
\end{eqnarray*}
By~\eqref{OVER}, we have that
\begin{equation}\label{P:X1}
{\mathcal{E}}_{ \{ E_u+\Psi>E_{\overline u_\star}\} }(E_{\overline u_\star}) \le 
{\mathcal{E}}_{ \{ E_u+\Psi>E_{\overline u_\star}\} }(\alpha).
\end{equation}
Similarly,
by~\eqref{UNDER}, we have that
\begin{equation}\label{P:X2}
{\mathcal{E}}_{ \{ E_u+\Psi<E_{\underline u_\star}\} }(E_{\underline u_\star}) \le 
{\mathcal{E}}_{ \{ E_u+\Psi<E_{\underline u_\star}\} }(\gamma).
\end{equation}
In addition, from Lemma~\ref{INTRAP},
\begin{equation}\label{P:X3}
{\mathcal{E}}_{{\mathcal{B}}_R }(E_u) \le 
{\mathcal{E}}_{{\mathcal{B}}_R}(\beta).
\end{equation}
Notice also that~$ \{ E_u+\Psi>E_{\overline u_\star}\}$ and~$
\{ E_u+\Psi<E_{\underline u_\star}\}$ are subset of~$ {{\mathcal{B}}_R } $.
Thus, using~\eqref{P:X1}, \eqref{P:X2} and~\eqref{P:X3},
\begin{eqnarray*}
{\mathcal{E}}_{{\mathcal{B}}_R }(E_u+\Psi)&=&
{\mathcal{E}}_{\{ E_u+\Psi>E_{\overline u_\star}\} }(E_u+\Psi)+
{\mathcal{E}}_{\{ E_u+\Psi<E_{\underline u_\star}\} }(E_u+\Psi)+
{\mathcal{E}}_{{\mathcal{B}}_R\cap
\{ E_{\underline u_\star}\le E_u+\Psi\le E_{\overline u_\star}\} }(E_u+\Psi)\\
&=&
{\mathcal{E}}_{\{ E_u+\Psi>E_{\overline u_\star}\} }(\alpha)+
{\mathcal{E}}_{\{ E_u+\Psi<E_{\underline u_\star}\} }(\gamma)+
{\mathcal{E}}_{{\mathcal{B}}_R\cap
\{ E_{\underline u_\star}\le E_u+\Psi\le E_{\overline u_\star}\} }(\beta)\\
&\ge& 
{\mathcal{E}}_{\{ E_u+\Psi>E_{\overline u_\star}\} }(E_{\overline u_\star})+
{\mathcal{E}}_{\{ E_u+\Psi<E_{\underline u_\star}\} }(E_{\underline u_\star})+
{\mathcal{E}}_{{\mathcal{B}}_R\cap
\{ E_{\underline u_\star}\le E_u+\Psi\le E_{\overline u_\star}\} }(\beta)\\
&=&
{\mathcal{E}}_{\{ E_u+\Psi>E_{\overline u_\star}\} }(\beta)+
{\mathcal{E}}_{\{ E_u+\Psi<E_{\underline u_\star}\} }(\beta)+
{\mathcal{E}}_{{\mathcal{B}}_R\cap
\{ E_{\underline u_\star}\le E_u+\Psi\le E_{\overline u_\star}\} }(\beta)\\
&=&{\mathcal{E}}_{{\mathcal{B}}_R }(\beta)\\
&\ge& {\mathcal{E}}_{{\mathcal{B}}_R }(E_u).
\end{eqnarray*}
This shows that~$E_u$ is an extended local minimizer, as desired.
\end{proof}

\section{Proof of Theorem~\ref{MAIN}}\label{CLASS:D}

By Lemma~\ref{02738}, we know that
both~$\underline u$ and~$\overline u$ are local minimizers.
This and Lemma~\ref{0tyg27427412} imply that~$u$ is a local minimizer.
Therefore, by Lemma 8.1 in~\cite{DSVarxiv},
we have that~$u_\varepsilon(x):=u(x/\varepsilon)$ approaches, as~$\varepsilon\to0$, up to subsequences,
a step function of the form~$\chi_E-\chi_{E^c}$, and the level sets of~$u_\varepsilon$ approach~$\partial E$
locally 
uniformly. {We claim that
\begin{equation}\label{89:013948548}
{\mbox{$E$ is a halfspace.
}} 
\end{equation}
In a sense, this claim is a version of the Bernstein-type result in~\cite{figalli},
for which we provide a complete and independent proof, by arguing as follows.
To prove~\eqref{89:013948548}, it is enough to show that
\begin{equation}\label{89:013948548:E}
{\mbox{either~$E$ is contained in a halfspace, or it contains a halfspace,
}} 
\end{equation}
see e.g. Lemma~8.3 in~\cite{DSVarxiv}, hence we focus on the proof of~\eqref{89:013948548:E}.
To this end, we distinguish two cases,
\begin{equation}\label{CA:-1}
{\mbox{either~$\underline u=-1$ and~$\overline u=1$,}}
\end{equation}
or 
\begin{equation}\label{CA:-2}
{\mbox{at least one between~$\underline u$ and~$\overline u$ is non-constant.
}}\end{equation}
In case~\eqref{CA:-1}, we can exploit Theorem~1.4 in~\cite{DSVarxiv}
and obtain in particular that~\eqref{89:013948548:E} holds true,
so we focus on case~\eqref{CA:-2} and we suppose that~$\underline u$
is non-constant (the case in which~$\overline u$ is non-constant is similar).

Then, setting~$\underline{u}_\varepsilon(x):=\underline u(x/\varepsilon)$
we have that~$\underline{u}_\varepsilon$ approaches,
as~$\varepsilon\to0$, up to subsequences,
a step function of the form~$\chi_{\underline E}-\chi_{{\underline E}^c}$.
Since~$\underline{u}$ is a non-constant $1$D function, we have that~${\underline E}$ is a halfspace.
In addition, a.e.~$x\in\R^n$,
$$ (\chi_{\underline E}-\chi_{{\underline E}^c})(x)=
\lim_{\varepsilon\searrow0} \underline u(x/\varepsilon)\le
\lim_{\varepsilon\searrow0} u(x/\varepsilon)=
(\chi_{E}-\chi_{{E}^c})(x),$$
and consequently~${\underline E}\subseteq E$.
This proves~\eqref{89:013948548:E}, and so~\eqref{89:013948548}.
}

Hence,
$u$ is necessarily $1$D, thanks to~\eqref{89:013948548}
and Theorem~1.2 in~\cite{DSVarxiv}.

\section{The case of two-dimensional profiles at infinity}\label{UL810}

For completeness, in relation with the work in~\cite{MR3107529},
we observe that our arguments provide
also the following variation of Theorem~\ref{MAIN}:

\begin{theorem}\label{MAIN:VAR}
Let~$s\in\left(0,\frac12\right)$ and~$u\in C^2(\R^n,[-1,1])$
be a solution of~$(-\Delta)^s u=u-u^3$ in~$\R^n$, 
with~$\partial_{x_n}u>0$ in~$\R^n$.

Let 
$$ \underline u(x'):=\lim_{x_n\to-\infty} u(x',x_n)\quad{\mbox{ and }}\quad
\overline u(x'):=\lim_{x_n\to+\infty} u(x',x_n).$$
Assume that (possibly after a rotation) 
$\underline u$ and~$\overline u$ depend on at most two Euclidean variables
(not necessarily the same).

Then $u$ is a local minimizer.

Moreover, if $n\le8$, there exists~$s(n)\in \left[0,\frac12\right)$ such that if~$
s\in\left(s(n),\frac12\right)$ then~$u$ is $1$D.
\end{theorem}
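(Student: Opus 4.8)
The plan is to run the argument of the proof of Theorem~\ref{MAIN}, replacing the ingredients that are specific to $n\le 3$ by their two-dimensional analogues.

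\emph{Local minimality of $u$.} First I would pass the equation and the stability inequality to the limit as $x_n\to\pm\infty$, exactly as in the proof of~\eqref{sta000}, obtaining that $\underline u$ and $\overline u$ are stable solutions of $(-\Delta)^s v=f(v)$ in~$\R^{n-1}$. Since, up to a rotation, each of them depends on at most two variables, a further dimension reduction of the same flavour (testing against functions of the form $\zeta(x_1,x_2,z)\,\eta_\eps(x_3,\dots,x_{n-1})$ and letting $\eps\searrow 0$) shows that, viewed on~$\R^2$, they are stable solutions of the planar equation. By the classification of stable solutions in the plane (Theorem~2.12 in~\cite{cabre-TAMS}, or~\cite{SV09}), each of them is $1$D and monotone, hence a local minimizer in~$\R$ by Lemma~\ref{CL:MO}; a slicing argument as in the proof of Lemma~\ref{0tyg27427412}, iterated over the $n-3$ inactive variables, then promotes this to local minimality of $\underline u$ and $\overline u$ in~$\R^{n-1}$. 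At this point Lemma~\ref{0tyg27427412} applies and gives that $E_u$ is an extended local minimizer and $u$ is a local minimizer in~$\R^n$. This proves the first assertion, for every $n$ and every $s\in(0,1)$.

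\emph{Reduction to a rigidity statement for the blow-down.} Next, since $u$ is a local minimizer, Lemma~8.1 in~\cite{DSVarxiv} yields that, along a subsequence, $u_\eps(x):=u(x/\eps)$ converges to $\chi_E-\chi_{E^c}$ with level sets converging locally uniformly to~$\partial E$, where $E$ is a minimizing $s$-minimal cone. Because $\partial_{x_n}u>0$, the set $\{u>0\}$ is an epigraph in the direction $e_n$, and this property passes to the limit, so $\partial E$ is a graph over~$\R^{n-1}$. As in the proof of Theorem~\ref{MAIN}, it is enough to prove~\eqref{89:013948548} (that $E$ is a halfspace), since then $u$ is $1$D by Theorem~1.2 in~\cite{DSVarxiv}; and for that it suffices to establish~\eqref{89:013948548:E}, by Lemma~8.3 in~\cite{DSVarxiv}.

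\emph{The dichotomy, and the main obstacle.} I would then split into cases~\eqref{CA:-1} and~\eqref{CA:-2}, which are exhaustive: the constant $0$ is an unstable solution (the scaling computation in~\eqref{NOT Z}), so a constant profile must equal $\pm1$, and $\underline u\le\overline u$ together with $\partial_{x_n}u>0$ forces $\underline u\equiv-1$, $\overline u\equiv1$ whenever both profiles are constant. In case~\eqref{CA:-2}, say with $\underline u$ non-constant, the previous step shows $\underline u$ is $1$D, and the layer analysis of Lemma~\ref{CL:MO} (Theorem~2.2(i) in~\cite{cabre-sire-AIHP} together with~\eqref{BISTABLE}) shows that it is a transition layer from $-1$ to $1$; hence its own blow-down $\underline E$ is a halfspace, and from $\underline u\le u$ one gets $\underline E\subseteq E$, which is~\eqref{89:013948548:E} (the case of $\overline u$ non-constant is symmetric). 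This part needs no restriction on $n$ or~$s$. The remaining case~\eqref{CA:-1} is the hard one: there $u$ satisfies the limit condition~\eqref{LIMIT}, it is a local minimizer, and $E$ is a minimizing $s$-minimal cone which is a graph over~$\R^{n-1}$; here I would invoke a Bernstein-type rigidity for nonlocal minimal graphs in the spirit of~\cite{figalli}, combined with the improvement-of-flatness and dimension-reduction theory for $s$-minimal surfaces, to conclude that $E$ must be a halfspace when $n\le 8$. This is precisely the step that cannot be made uniform in $s\in\left(0,\frac12\right)$: the flatness of $s$-minimal graphs in dimensions $4\le n\le 8$ is presently known only for $s$ sufficiently close to $\frac12$ (through the limiting comparison with the classical minimal-surface regularity theory, which is sharp exactly up to~$\R^8$), and this is what pins down the threshold $s(n)\in\left[0,\frac12\right)$ in the statement. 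Once $E$ is a halfspace, Theorem~1.2 in~\cite{DSVarxiv} gives that $u$ is $1$D, completing the proof.
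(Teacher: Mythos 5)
Your proposal is correct and follows essentially the same route as the paper: the paper's proof of Theorem~\ref{MAIN:VAR} is exactly the list of modifications you describe — run the proof of Lemma~\ref{02738} using that the profiles are stable two-dimensional solutions to get that they are $1$D local minimizers, apply Lemma~\ref{0tyg27427412} for the local minimality of $u$, and then rerun the argument of Section~\ref{CLASS:D}, replacing Theorem~1.4 of~\cite{DSVarxiv} by Theorem~1.6 of~\cite{DSVarxiv} (which packages the classification of $s$-minimal cones for $n\le8$ and $s$ close to $\tfrac12$ that you invoke by hand) to handle the case~$\underline u\equiv-1$, $\overline u\equiv1$.
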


The proof of Theorem~\ref{MAIN:VAR} shares
the point of view taken in~\cite{MR2483642} and~\cite{FV11},
and follows the same lines
as that of Theorem~\ref{MAIN}, with the modifications listed here below:
\begin{itemize}
\item By
following verbatim the proof of Lemma~\ref{02738}, and
exploiting that~$\underline u$ and~$\overline u$ are stable two-dimensional solutions, one obtains that 
they are local minimizers and~$1$D;
\item {F}rom this and Lemma~\ref{0tyg27427412}, one deduces
the first claim in Theorem~\ref{MAIN:VAR};
\item The second claim in Theorem~\ref{MAIN:VAR} follows
from the first claim and the argument in Section~\ref{CLASS:D}
(in this framework, for $s$ large enough,
one can exploit
Theorem~1.6 of~\cite{DSVarxiv}
in place of Theorem~1.4 of~\cite{DSVarxiv}).
\end{itemize}

\vfill
\end{document}